\documentclass[letterpaper, 10pt, conference]{Classes/ieeeconf}
\IEEEoverridecommandlockouts
\overrideIEEEmargins

\usepackage{Packages/decar-common}
\usepackage{Packages/decar-dynamics}
\usepackage{Packages/decar-lie}
\usepackage{Packages/decar-post}
\usepackage{Packages/myFormat}

\addbibresource{refs.bib}

\title{\LARGE \bf Input-Output Stability of Gradient Descent: A Discrete-Time Passivity-Based Approach}

\author{Sepehr Moalemi$^{1}$ and James Richard Forbes$^{1}$
    \thanks{$^{1}$Sepehr Moalemi {\tt\small sepehr.moalemi@mail.mcgill.ca} and James Richard Forbes {\tt\small james.richard.forbes@mcgill.ca} are with the Department of Mechanical Engineering, McGill University, 817 Sherbrooke St. W., Montreal, QC H3A 0C3, Canada.}
    \thanks{%
        Funding provided by the NSERC Discover Grant program.
    }%
}

\begin{document}
    \maketitle
    \thispagestyle{empty}
    \pagestyle{empty}

    \fontdimen16\textfont2=\fontdimen17\textfont2
    \fontdimen13\textfont2=5pt
    \begin{abstract}
    This paper presents a discrete-time passivity-based analysis of the gradient descent method for a class of functions with sector-bounded gradients. Using a loop transformation, it is shown that the gradient descent method can be interpreted as a passive controller in negative feedback with a very strictly passive system. The passivity theorem is then used to guarantee input-output stability, as well as the global convergence, of the gradient descent method. Furthermore, provided that the lower and upper sector bounds are not equal, the input-output stability of the gradient descent method is guaranteed using the weak passivity theorem for a larger choice of step size. Finally, to demonstrate the utility of this passivity-based analysis, a new variation of the gradient descent method with variable step size is proposed by gain-scheduling the input and output of the gradient.
\end{abstract}
\begin{keywords}
    Gradient descent, passivity-based control, input-output stability.
\end{keywords}
    \section{Introduction}
    Optimization is an indispensable tool used in many branches of engineering. Recently, popular first-order optimization methods such as gradient decent, Polyak's heavy-ball \cite{polyak_hb}, and Nesterov's accelerated method \cite{Nesterov}, have been analyzed through the lens of feedback control in a discrete-time setting.
    
    In particular, in~\cite{ugrinovskii}, for a function with sector-bounded gradient, the circle criterion is used to derive expressions for the parameters of the heavy ball method that guarantee its global convergence. This result was extended in~\cite{alex_petersen} by incorporating an additional parameter, leading to the new generalized accelerated gradient method. The small gain theorem is used in~\cite{hu_lessard} to relate the input-output gain computations of a defined complementary sensitivity function to the convergence properties of gradient descent (GD) methods. Additionally,~\cite{hu_lessard} demonstrates a connection between the step size selection of the gradient method and \(\mathcal{H}_{\infty} \) state feedback synthesis. For a class of convex constraint-coupled optimization problems, a passivity-based analysis of the alternating direction method of multipliers (ADMM) is provided in~\cite{Notarnicola_passivity_admm}. Dissipativity theory and Lyapunov-based approaches for algorithm analysis have also been studied in~\cite{lessard_dissipativity, bryan_lessard_Lyapunov}. Other robust control tools such as integral quadratic constraints~(IQCs) have been used in the analysis and synthesis of novel first-order iterative optimization methods. Within the context of optimization, the IQC framework is first used in~\cite{lessard_recht_iqc} to derive numerical upper bounds on convergence rates of first-order methods. For the class of \(m\)-strongly convex and \(L\)-smooth functions, a graphical design procedure based on IQCs is used to derive the optimal parameters for the triple momentum method introduced in~\cite{triple_momentum}. The IQC framework is also used in~\cite{seiler_iqc} to perform an analysis of GD with varying step sizes, where the algorithm is modeled as a linear parameter-varying (LPV) system.

    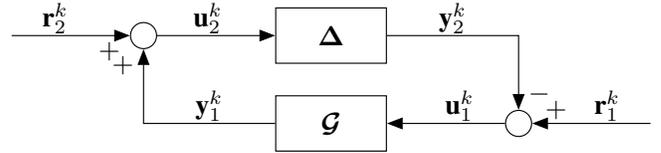
\begin{figure}[t]
        \centering
        \vspace{2pt}
    \resizebox{1.0\columnwidth}{!}{\begin{tikzpicture}[auto, node distance=2cm]

    \def\blockWidth{4em}                 
    \def\blockHeight{2em}                
    \def\blockVerticalSpacing{0.4cm}    

    \def\arrowTipLength{2mm}             
    \def\arrowTipWidth{1.5mm}            

    \def\smallArrowLength{1.5cm}         
    \def\smallArrowStretch{1.5}          
    \def\smallArrowShrink{0.8}           

    \tikzset{     
        block/.style = {draw, fill=none, rectangle, 
                        minimum height=\blockHeight, minimum width=\blockWidth}
    }

    \tikzset{    
        sum/.style  = {draw, fill=none, circle, node distance=1cm},
    }

    \tikzset{    
        tmp/.style    = {coordinate}, 
        input/.style  = {coordinate},
        output/.style = {coordinate}
    }

    \tikzset{      
        pinstyle/.style = {pin edge={to-,thin}}
    }

    \tikzset{     
        myarrow/.style = {->, -{Triangle[length=\arrowTipLength, width=\arrowTipWidth]}}
    }

    \node [block] (Nonlinearity) {\(\mbs{\Delta}\)};
    \node [block, , below=\blockVerticalSpacing of Nonlinearity.south, anchor=north](G) {\(\bm{\mathcal{G}}\)};

    \node [tmp, right=\smallArrowLength of G.east] (G_in) {};
    \node [tmp, left=\smallArrowLength  of G.west] (G_out) {};

    \node [sum, left=\smallArrowLength of Nonlinearity.west] (sum_node_up){};
    \node [sum, right=\smallArrowLength of G.east]           (sum_node_down){};


    \draw [myarrow](G) -| node[pos=0.25, above, yshift=-0.09cm]{$\mbf{y}^{k}_1$} (sum_node_up)node[at end, left, yshift=-\arrowTipLength] {$+$};
    \draw [myarrow](sum_node_down) -- (G)node[at end, above, xshift=\smallArrowLength/2 + 5, yshift=-0.09cm] {$\mbf{u}^{k}_1$};
    \draw [myarrow](sum_node_up) -- (Nonlinearity)node[midway, above, xshift=-3, yshift=-0.09cm] {$\mbf{u}^{k}_2$};
    \draw [myarrow](Nonlinearity.east) -| node[pos=0.25, above, yshift=-0.09cm]{$\mbf{y}^{k}_2$}(sum_node_down)node[at end, right, yshift=\arrowTipLength] {$-$};

    \draw [myarrow] ([xshift=\smallArrowLength+5] sum_node_down.center) -- node[above, midway, xshift=\arrowTipLength, yshift=-0.09cm] {$\mbf{r}^{k}_1$}(sum_node_down) node[at end, above, xshift=\arrowTipLength+3, yshift=-2] {$+$};
    \draw [myarrow] ([xshift=-\smallArrowLength-5] sum_node_up.center) -- node[above, midway, xshift=-\arrowTipLength, yshift=-0.09cm] {$\mbf{r}^{k}_2$}(sum_node_up) node[at end, left, yshift=-\arrowTipLength] {$+$};
\end{tikzpicture}}%

        \vspace{-15pt}
        \caption{The negative feedback interconnection of two systems \(\bm{\mathcal{G}}\) and \(\mbs{\Delta}\).}
        \label{fig:negative_feedback}
        \vspace{-10pt}
    \end{figure}
    This paper considers a general class of possibly non-convex functions with a unique global minimizer and a sector-bounded gradient, with lower bound $m$ and upper bound $L$. Inspired by~\cite{hu_lessard, lessard_recht_iqc}, the GD method is recast as a discrete-time linear time-invariant (LTI) controller \(\bm{\mathcal{G}}\) in negative feedback with a static memoryless nonlinearity \(\mbs{\Delta}\), as per \Cref{fig:negative_feedback}. It is shown that the GD controller \(\bm{\mathcal{G}}\), as presented in~\cite{lessard_recht_iqc}, is strictly proper and, therefore, \emph{not} passive. As such, a loop transformation is performed, adding a positive feedthrough term to \(\bm{\mathcal{G}}\) and a positive feedback term to \(\mbs{\Delta}\). It is then shown that the GD method can be interpreted as a passive controller in negative feedback with a very strictly passive (VSP) system, provided the step size, \(\alpha\), satisfies \(\alpha \in \mleft(0, 2/L\mright)\). The \emph{strong passivity theorem} is then used to guarantee the input-output stability, as well as global convergence, of the GD method. Provided the lower and upper sector bounds are not equal, input-output stability of the GD method can still be guaranteed for \(\alpha = 2/L\), using the \emph{weak passivity theorem}.
    Finally, using a discrete-time adaptation of the gain-scheduling architecture in~\cite{Damaren_passive_map}, a new variation of the GD method with varying step size is proposed, where the input and output of the gradient are scaled using the scheduling functions. In summary, the novel contributions of this work are as follows:
    \begin{enumerate}
        \item {%
            Presenting a discrete-time passivity-based analysis of the GD method for a general class of possibly non-convex functions, having a unique global minimizer and a sector-bounded gradient.
        }%
        \item{%
            Demonstrating input-output stability of the GD method for a larger step size using the weak passivity theorem.
        }%
        \item{%
            Introducing a variation of the GD method with variable step size, using a passivity-based gain-scheduled control architecture.
        }%
    \end{enumerate}

    The remainder of this paper is as follows. Preliminaries and control interpretation of the GD method are presented in \Cref{sec:preliminaries}. The loop transformation and passivity-based analysis of the GD method are presented in \Cref{sec:main_results}. A discussion on the connection between the input-output stability of the GD method and the convergence of the algorithm, as well as the extension of the passivity-based analysis to the case of time-varying step sizes is presented in \Cref{sec:discussion}. Numerical results are shown in \Cref{sec:simulation}, followed by closing remarks in \Cref{sec:closing_remarks}.
    \section{Preliminaries} \label{sec:preliminaries}
\subsection{Notation}
    Scalars, column matrices, and matrices are denoted as \mbox{\(\alpha \in \mathbb{R}\)}, \(\mbf{v} \in \mathbb{R}^{n}\), and \(\mbf{A} \in \mathbb{R}^{n \times m}\), respectively. Operators are denoted as \(\bm{\mathcal{G}}\) and function classes are denoted as \(\mathcal{F}\). A positive definite matrix is denoted as \(\mbf{A} \succ 0\) and a negative semidefinite matrix is denoted as \(\mbf{A} \preceq 0\). The identity and zero matrices are \(\eye\) and \(\mbf{0} \), respectively.
\subsection{Spaces and Operators}
    \begin{definition}[Truncation operator {~\cite{feedback_systems}}]
        For a function \(\mbf{u} : \mathbb{Z}_{\geq 0} \to \mathbb{R}^{n}\) and \(T \in \mathbb{Z}_{\geq 0}\), its truncation, \(\mbf{u}_T\), is defined as \(\mbf{u}_T(k) = \mbf{u}(k)\) for \(k < T\) and \(\mbf{u}_T(k) = \mbf{0}\) for \(t \geq T\).
    \end{definition}
    \vspace{2pt}
    \begin{definition}[Truncated inner product {~\cite{feedback_systems}}]
        For functions \(\mbf{u}, \mbf{y} : \mathbb{Z}_{\geq 0} \to \mathbb{R}^{n}\), the truncated inner product over the discrete-time interval \(\mathcal{T} = \mleft\{0, 1, \hdots, T-1 \mright\}\) is defined as \(\langle \mbf{u}, \mbf{y}\rangle_T = \langle\mbf{u}_T, \mbf{y}_T\rangle = \sum_{k \in \mathcal{T}} \mbf{u}^{\trans}(k) \mbf{y}(k), \forall T \in \mathbb{Z}_{> 0}\).
    \end{definition}
    \vspace{2pt}
    \begin{definition}[$\ell_{2e}$ and $\ell_{2}$ function spaces {~\cite{feedback_systems}}] \label{def:Lp_spaces}
        Given a function \(\mbf{u} : \mathbb{Z}_{\geq 0} \to \mathbb{R}^{n} \), \(\mbf{u} \in \ell_{2e}\) if \(\mleft\| \mbf{u} \mright\|_{2T}^2 = \langle \mbf{u}, \mbf{u}\rangle_T < \infty\), for all \(T \in \mathbb{Z}_{>0}\). Moreover, \(\mbf{u} \in \ell_{2}\) if \(\mleft\| \mbf{u} \mright\|_{2}^2 = \langle \mbf{u}, \mbf{u}\rangle < \infty\).
    \end{definition}
    \vspace{2pt}
    \begin{definition}[$\ell_{2}$ stability {~\cite{feedback_systems}}] \label{def:Lp_stability}
        The operator \(\bm{\mathcal{G}} : \ell_{2e} \to \ell_{2e}\) is \(\ell_{2}\)-stable if \(\bm{\mathcal{G}}\mbf{u} \in \ell_{2}\) for all \(\mbf{u} \in \ell_{2}\).
    \end{definition}
    %
\subsection{Passivity Theorem and Definitions}
    \begin{definition}[Passivity {~\cite{feedback_systems}}] \label{def:passivity}
        Consider a square system with input \(\mbf{u} \in \ell_{2e}\) and output \(\mbf{y} \in \ell_{2e}\) mapped through the operator \(\bm{\mathcal{G}} : \ell_{2e} \to \ell_{2e}\). The system \(\bm{\mathcal{G}}\) is 
        \begin{enumerate}[label=\textit{\roman*})]
            \item {%
            \emph{passive} if \(\exists \beta \in \mathbb{R}_{\leq 0} \) such that
            \begin{equation*}
                \langle \mbf{u}, \mbf{y}\rangle_T 
                \geq
                \beta
                , \quad \forall \mbf{u} \in \ell_{2e},\,\forall T \in \mathbb{Z}_{>0},
            \end{equation*}
            }%
            \item {%
            \emph{input strictly passive (ISP)} if \(\exists \beta \in \mathbb{R}_{\leq 0} \) and \(\exists \delta \in \mathbb{R}_{>0} \) such that
            \begin{equation*}
                \langle \mbf{u}, \mbf{y}\rangle_T 
                \geq
                \beta + \delta \mleft\| \mbf{u} \mright\|^{2}_{2T}
                , \quad \forall \mbf{u} \in \ell_{2e},\,\forall T \in \mathbb{Z}_{>0},
            \end{equation*}
            }%
            \item {%
            \emph{very strictly passive (VSP)} if \(\exists \beta \in \mathbb{R}_{\leq 0} \) and \(\exists \delta, \varepsilon \in \mathbb{R}_{>0} \) such that
            \begin{equation*}
                \langle \mbf{u}, \mbf{y}\rangle_T 
                \geq
                \beta + \delta \mleft\| \mbf{u} \mright\|^{2}_{2T} + \varepsilon \mleft\| \mbf{y} \mright\|^{2}_{2T}
                , \quad \forall \mbf{u} \in \ell_{2e},\,\forall T \in \mathbb{Z}_{>0}.
            \end{equation*}
            }%
        \end{enumerate}
    \end{definition}
    \vspace{4pt}
    \begin{lemma}[Positive real {~\cite[Lemma 3]{anderson}}] \label{lemma:positive_real}
        Let \(\mbf{G}(z)\) be a square matrix of real rational functions of \(z\) and let \(\mleft(\mbf{A}, \mbf{B}, \mbf{C}, \mbf{D}\mright) \) be a minimal realization of \(\mbf{G}(z)\). Then \(\mbf{G}(z)\) is \emph{positive real} if and only if \(\exists\mbf{P} = \mbf{P}^{\trans} \succ 0\) such that
        \begin{equation}\label{eqn:positive_real}
            \begin{bmatrix}
                \mbf{A}^{\trans} \mbf{P} \mbf{A} - \mbf{P} & \mbf{A}^{\trans} \mbf{P} \mbf{B} - \mbf{C}^{\trans} \\
                \mleft( \mbf{A}^{\trans} \mbf{P} \mbf{B} - \mbf{C}^{\trans} \mright)^{\trans}  & \mbf{B}^{\trans} \mbf{P} \mbf{B} - \mleft( \mbf{D} + \mbf{D}^{\trans} \mright)
            \end{bmatrix}
            \preceq 0.
        \end{equation}
    \end{lemma}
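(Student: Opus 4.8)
The plan is to prove the two implications of this equivalence separately, working from the frequency-domain characterization of positive realness: a square real-rational $\mbf{G}(z)$ is positive real if and only if it is analytic for $|z| > 1$ and $\mbf{G}(z) + \mbf{G}^{\mathsf{H}}(z) \succeq 0$ for all $z$ on the unit circle $|z| = 1$. Throughout I would use $\mbf{G}(z) = \mbf{C}\mleft(z\eye - \mbf{A}\mright)^{-1}\mbf{B} + \mbf{D}$ from the minimal realization and let $\mbf{M}$ denote the symmetric matrix on the left-hand side of~\eqref{eqn:positive_real}, so that the claim reduces to connecting $\mbf{M} \preceq 0$ to this circle condition.

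For sufficiency ($\mbf{M} \preceq 0$ implies positive realness) I would argue by a direct frequency-domain substitution. Fix $z$ with $|z| = 1$ and an arbitrary complex vector $\mbf{w}$, and set $\mbf{x} = \mleft(z\eye - \mbf{A}\mright)^{-1}\mbf{B}\mbf{w}$ so that $\mbf{A}\mbf{x} + \mbf{B}\mbf{w} = z\mbf{x}$ and $\mbf{G}(z)\mbf{w} = \mbf{C}\mbf{x} + \mbf{D}\mbf{w}$. Evaluating the Hermitian form $\mleft[\mbf{x}^{\mathsf{H}}\ \mbf{w}^{\mathsf{H}}\mright]\mbf{M}\mleft[\mbf{x}^{\mathsf{H}}\ \mbf{w}^{\mathsf{H}}\mright]^{\mathsf{H}}$, the $\mbf{P}$-dependent entries regroup as $\mleft(\mbf{A}\mbf{x} + \mbf{B}\mbf{w}\mright)^{\mathsf{H}}\mbf{P}\mleft(\mbf{A}\mbf{x} + \mbf{B}\mbf{w}\mright) - \mbf{x}^{\mathsf{H}}\mbf{P}\mbf{x} = \mleft(|z|^{2} - 1\mright)\mbf{x}^{\mathsf{H}}\mbf{P}\mbf{x}$, which vanishes on the unit circle, and what survives is precisely $-\mbf{w}^{\mathsf{H}}\mleft(\mbf{G}(z) + \mbf{G}^{\mathsf{H}}(z)\mright)\mbf{w}$. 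Since $\mbf{M} \preceq 0$ makes this form nonpositive, I conclude $\mbf{G}(z) + \mbf{G}^{\mathsf{H}}(z) \succeq 0$. Analyticity for $|z| > 1$ follows separately from the $(1,1)$ block, which gives the Stein inequality $\mbf{A}^{\trans}\mbf{P}\mbf{A} - \mbf{P} \preceq 0$ with $\mbf{P} \succ 0$ and hence confines the eigenvalues of $\mbf{A}$, i.e.\ the poles of $\mbf{G}$, to the closed unit disk.

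For necessity (positive realness implies $\mbf{M} \preceq 0$) I would recast the LMI as a factorization: for a fixed $\mbf{P} \succ 0$, a completion-of-squares argument shows $\mbf{M} \preceq 0$ is equivalent to the existence of $\mbf{L}$ and $\mbf{W}$ satisfying the discrete positive-real-lemma equations $\mbf{P} - \mbf{A}^{\trans}\mbf{P}\mbf{A} = \mbf{L}^{\trans}\mbf{L}$, $\mbf{C}^{\trans} - \mbf{A}^{\trans}\mbf{P}\mbf{B} = \mbf{L}^{\trans}\mbf{W}$, and $\mbf{D} + \mbf{D}^{\trans} - \mbf{B}^{\trans}\mbf{P}\mbf{B} = \mbf{W}^{\trans}\mbf{W}$, so the task reduces to producing a $\mbf{P} \succ 0$ together with factors $\mbf{L}$, $\mbf{W}$ solving these equations via spectral factorization: positive realness makes the para-Hermitian $\mbs{\Phi}(z) = \mbf{G}(z) + \mbf{G}^{\trans}(z^{-1})$ nonnegative on $|z| = 1$, so it factors as $\mbs{\Phi}(z) = \mbf{W}^{\trans}(z^{-1})\mbf{W}(z)$; realizing the stable factor on the same $\mbf{A}$ and matching coefficients supplies $\mbf{L}$, $\mbf{W}$ and a $\mbf{P}$ solving a discrete algebraic Riccati equation, with minimality of $\mleft(\mbf{A}, \mbf{B}, \mbf{C}, \mbf{D}\mright)$ upgrading $\mbf{P} \succeq 0$ to $\mbf{P} \succ 0$. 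I expect this direction to be the main obstacle, since it rests on the existence and stability of the spectral factor and the solvability of the Riccati equation -- the substantive content of the discrete-time positive real (Kalman--Yakubovich--Popov) lemma -- whereas sufficiency is a routine substitution; the delicate sub-case is poles of $\mbf{G}$ on $|z| = 1$, where $\mbs{\Phi}$ loses strict positivity and the argument must lean on the simple-pole, nonnegative-residue structure that positive real functions enjoy there.
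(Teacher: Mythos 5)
The paper offers no proof of this lemma at all: it is imported verbatim from~\cite{anderson} and used as a black box, so there is no in-paper argument to compare against, and your proposal has to be judged against the classical proof in the cited literature---which, in outline, it faithfully reproduces. Your sufficiency direction is correct and essentially complete. With $\mbf{x} = (z\eye - \mbf{A})^{-1}\mbf{B}\mbf{w}$, the Hermitian form of $\mbf{M}$ does collapse to $(\lvert z \rvert^{2} - 1)\,\mbf{x}^{\mathsf{H}}\mbf{P}\mbf{x} - \mbf{w}^{\mathsf{H}}\bigl(\mbf{G}(z) + \mbf{G}^{\mathsf{H}}(z)\bigr)\mbf{w}$, exactly as you compute. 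One refinement worth making: run the identical substitution for $\lvert z \rvert > 1$ rather than only on the circle, since there $\mbf{M} \preceq 0$ gives $\mbf{w}^{\mathsf{H}}\bigl(\mbf{G}(z) + \mbf{G}^{\mathsf{H}}(z)\bigr)\mbf{w} \geq (\lvert z \rvert^{2} - 1)\,\mbf{x}^{\mathsf{H}}\mbf{P}\mbf{x} \geq 0$ throughout the exterior of the disk, which establishes positive realness directly from its definition and spares you any appeal to a boundary-only characterization---a characterization that is delicate precisely at unit-circle poles, where $\mbf{G}$ is undefined. Your Stein-block argument for $\rho(\mbf{A}) \leq 1$ is right, and it is worth noting that $\mbf{P} \succ 0$ additionally forces eigenvalues of $\mbf{A}$ on the unit circle to be semisimple, which is part of what positive realness requires.

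The gap is the necessity direction, and you have named it yourself. The reduction of $\mbf{M} \preceq 0$ to the three factorized positive-real-lemma equations (writing $-\mbf{M}$ as a Gram matrix in $\mbf{L}$ and $\mbf{W}$) is sound, but everything downstream---existence and stability of a spectral factor of $\mbs{\Phi}(z) = \mbf{G}(z) + \mbf{G}^{\trans}(z^{-1})$, its realizability on the same $\mbf{A}$, solvability of the associated discrete Riccati equation, and minimality upgrading $\mbf{P} \succeq 0$ to $\mbf{P} \succ 0$---is asserted rather than proved, and spectral factorization of a para-Hermitian rational matrix that is merely nonnegative on $\lvert z \rvert = 1$ is a theorem of essentially the same depth as the lemma itself. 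The sub-case you flag at the end, poles of $\mbf{G}$ on the unit circle where $\mbs{\Phi}$ degenerates, is exactly where the original argument must lean on the simple-pole, nonnegative-residue structure and a lossless decomposition; your sketch points at this without resolving it. So your proposal is a genuine proof of sufficiency together with a correct roadmap for necessity that ultimately defers the substantive content to the cited result---which, to be fair, is no more than the paper itself does.
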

    \begin{remark}\label{remark:positive_real_is_passive}
        In \Cref{lemma:positive_real}, a necessary and sufficient condition for a positive real transfer matrix is given. However, an LTI system is passive if and only if it has a positive real transfer function~\cite[Theorem 13.27]{haddad_Chellaboina}.
    \end{remark}
    \begin{theorem}[Weak passivity theorem~\cite{feedback_systems}]\label{thrm:weak_passivity}
        Consider two systems \(\bm{\mathcal{G}} : \ell_{2e} \to \ell_{2e}\) and \(\mbs{\Delta} : \ell_{2e} \to \ell_{2e}\) in negative feedback as per \Cref{fig:negative_feedback} with \(\mbf{r}_2 = \mbf{0}\). If \(\bm{\mathcal{G}}\) is passive, \(\mbs{\Delta}\) is ISP, and \(\mbf{r}_1 \in \ell_{2}\), then \(\mbf{y}_1 \in \ell_{2}\).
    \end{theorem}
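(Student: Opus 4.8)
The plan is to exploit the cross-term cancellation that negative feedback induces between the two supply rates, and then to convert the resulting scalar inequality into a bound on $\|\mbf{y}_1\|_{2T}$ that is uniform in $T$. First I would write out the interconnection relations from \Cref{fig:negative_feedback} with $\mbf{r}_2 = \mbf{0}$, namely $\mbf{e}_1 = \mbf{r}_1 - \mbf{y}_2$ and $\mbf{e}_2 = \mbf{y}_1$, where $\mbf{y}_1 = \bm{\mathcal{G}}\mbf{e}_1$ and $\mbf{y}_2 = \mbs{\Delta}\mbf{e}_2$. Since $\bm{\mathcal{G}}$ is passive, \Cref{def:passivity} gives $\beta_1 \in \mathbb{R}_{\leq 0}$ with $\langle \mbf{e}_1, \mbf{y}_1\rangle_T \geq \beta_1$, and since $\mbs{\Delta}$ is ISP there exist $\beta_2 \in \mathbb{R}_{\leq 0}$ and $\delta \in \mathbb{R}_{>0}$ with $\langle \mbf{e}_2, \mbf{y}_2\rangle_T \geq \beta_2 + \delta\|\mbf{e}_2\|_{2T}^2$.

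Next I would substitute the interconnection relations and add the two inequalities. The term $-\langle \mbf{y}_2, \mbf{y}_1\rangle_T$ arising from $\langle \mbf{e}_1, \mbf{y}_1\rangle_T$ and the term $+\langle \mbf{y}_1, \mbf{y}_2\rangle_T$ arising from $\langle \mbf{e}_2, \mbf{y}_2\rangle_T$ cancel by symmetry of the truncated inner product, and using $\mbf{e}_2 = \mbf{y}_1$ together with $\beta = \beta_1 + \beta_2 \in \mathbb{R}_{\leq 0}$ this leaves
\[
    \langle \mbf{r}_1, \mbf{y}_1\rangle_T \geq \beta + \delta \|\mbf{y}_1\|_{2T}^2.
\]
Applying the Cauchy--Schwarz inequality $\langle \mbf{r}_1, \mbf{y}_1\rangle_T \leq \|\mbf{r}_1\|_{2T}\|\mbf{y}_1\|_{2T}$ then produces the scalar quadratic inequality
\[
    \delta\|\mbf{y}_1\|_{2T}^2 - \|\mbf{r}_1\|_{2T}\|\mbf{y}_1\|_{2T} + \beta \leq 0
\]
in the nonnegative unknown $\|\mbf{y}_1\|_{2T}$.

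Finally, I would solve this quadratic. Because $\delta > 0$ and $\beta \leq 0$, the discriminant $\|\mbf{r}_1\|_{2T}^2 - 4\delta\beta$ is nonnegative and the admissible values of $\|\mbf{y}_1\|_{2T}$ lie below the larger root, giving
\[
    \|\mbf{y}_1\|_{2T} \leq \frac{\|\mbf{r}_1\|_{2T} + \sqrt{\|\mbf{r}_1\|_{2T}^2 - 4\delta\beta}}{2\delta}.
\]
Since $\mbf{r}_1 \in \ell_2$, the truncated norm obeys $\|\mbf{r}_1\|_{2T} \leq \|\mbf{r}_1\|_2$ for every $T$, so the right-hand side is bounded by a constant independent of $T$. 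Hence $\|\mbf{y}_1\|_{2T}$ is uniformly bounded in $T$; letting $T \to \infty$ and invoking \Cref{def:Lp_spaces} yields $\|\mbf{y}_1\|_2 < \infty$, i.e.\ $\mbf{y}_1 \in \ell_2$.

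The step I expect to be the main obstacle is not any single calculation but the bookkeeping that makes the argument genuinely uniform in $T$: the bias terms $\beta_1,\beta_2$ and the gain $\delta$ must be constants independent of $T$, which \Cref{def:passivity} guarantees, and the monotonicity $\|\mbf{r}_1\|_{2T}\leq\|\mbf{r}_1\|_2$ must be used so that the quadratic bound does not degrade as $T$ grows. A secondary point worth stating carefully is that the cross-term cancellation hinges on the negative-feedback sign convention of \Cref{fig:negative_feedback}: with $\mbf{r}_2 = \mbf{0}$, exactly the $\mbf{y}_1$--$\mbf{y}_2$ cross terms must annihilate, which is what collapses the two supply rates into a single inequality driven only by $\mbf{r}_1$. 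Note that only passivity of $\bm{\mathcal{G}}$ (not strict passivity) is needed, since the strictness required to extract a coercive $\delta\|\mbf{y}_1\|_{2T}^2$ term is supplied entirely by the ISP property of $\mbs{\Delta}$.
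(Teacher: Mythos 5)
Your proof is correct. The paper does not prove this theorem itself---it is imported from~\cite{feedback_systems} as a known result---and your argument (sum the two supply-rate inequalities, cancel the \(\langle \mbf{y}_1, \mbf{y}_2\rangle_T\) cross terms via the negative-feedback relations, apply Cauchy--Schwarz, and solve the resulting quadratic in \(\|\mbf{y}_1\|_{2T}\) to get a bound uniform in \(T\)) is precisely the standard proof found in that reference, including the correct handling of the bias constants \(\beta_1, \beta_2\) and the monotonicity \(\|\mbf{r}_1\|_{2T} \leq \|\mbf{r}_1\|_{2}\) needed for the limit \(T \to \infty\).
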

    \begin{theorem}[Strong passivity theorem~\cite{feedback_systems}]\label{thrm:strong_passivity}
        Consider two systems \(\bm{\mathcal{G}} : \ell_{2e} \to \ell_{2e}\) and \(\mbs{\Delta} : \ell_{2e} \to \ell_{2e}\) in negative feedback as per \Cref{fig:negative_feedback}. If \(\bm{\mathcal{G}}\) is passive, \(\mbs{\Delta}\) is VSP, and \(\mbf{r}_1, \mbf{r}_2 \in \ell_{2}\), then \(\mbf{y}_1, \mbf{y}_2 \in \ell_{2}\).
    \end{theorem}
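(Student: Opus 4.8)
The plan is to exploit the interconnection structure together with the passivity and VSP inequalities, reducing the stability claim to a uniform-in-$T$ bound on the truncated output norms. First I would fix the sign conventions of the negative feedback loop in \Cref{fig:negative_feedback}, writing the inputs to the two blocks as $\mbf{u}_1 = \mbf{r}_1 - \mbf{y}_2$ and $\mbf{u}_2 = \mbf{r}_2 + \mbf{y}_1$, with $\mbf{y}_1 = \bm{\mathcal{G}}\mbf{u}_1$ and $\mbf{y}_2 = \mbs{\Delta}\mbf{u}_2$. Since all signals lie in $\ell_{2e}$, every truncated inner product and norm below is finite for each $T \in \mathbb{Z}_{>0}$.

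Next I would instantiate the defining inequalities of \Cref{def:passivity}. Passivity of $\bm{\mathcal{G}}$ gives $\langle \mbf{u}_1, \mbf{y}_1\rangle_T \geq \beta_1$, and VSP of $\mbs{\Delta}$ gives $\langle \mbf{u}_2, \mbf{y}_2\rangle_T \geq \beta_2 + \delta \|\mbf{u}_2\|_{2T}^2 + \varepsilon \|\mbf{y}_2\|_{2T}^2$, for constants $\beta_1, \beta_2 \leq 0$ and $\delta, \varepsilon > 0$. Substituting the loop relations and adding, the bilinear cross term cancels by symmetry of the inner product, since $-\langle \mbf{y}_2, \mbf{y}_1\rangle_T + \langle \mbf{y}_1, \mbf{y}_2\rangle_T = 0$, leaving
\begin{equation*}
    \langle \mbf{r}_1, \mbf{y}_1\rangle_T + \langle \mbf{r}_2, \mbf{y}_2\rangle_T \geq \beta_1 + \beta_2 + \delta \|\mbf{r}_2 + \mbf{y}_1\|_{2T}^2 + \varepsilon \|\mbf{y}_2\|_{2T}^2.
\end{equation*}
This is the crux of the argument: the $\varepsilon$ term already controls $\mbf{y}_2$, while expanding $\|\mbf{r}_2 + \mbf{y}_1\|_{2T}^2$ surfaces a $\delta \|\mbf{y}_1\|_{2T}^2$ contribution that controls $\mbf{y}_1$. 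This is precisely why VSP, supplying \emph{both} $\delta$ and $\varepsilon$, is required to bound both outputs, whereas input strict passivity alone suffices in \Cref{thrm:weak_passivity}, where $\mbf{r}_2 = \mbf{0}$ and only $\mbf{y}_1$ must be bounded.

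Then I would move every term containing an external input to the left and bound it using the Cauchy--Schwarz and Young inequalities, for instance $\langle \mbf{r}_1, \mbf{y}_1\rangle_T \leq \frac{1}{2\eta}\|\mbf{r}_1\|_{2T}^2 + \frac{\eta}{2}\|\mbf{y}_1\|_{2T}^2$, choosing the free parameters small enough that the induced $\|\mbf{y}_1\|_{2T}^2$ and $\|\mbf{y}_2\|_{2T}^2$ contributions are strictly dominated by the $\delta$ and $\varepsilon$ terms on the right. This yields an inequality of the form $c_1 \|\mbf{y}_1\|_{2T}^2 + c_2 \|\mbf{y}_2\|_{2T}^2 \leq a_1 \|\mbf{r}_1\|_{2T}^2 + a_2 \|\mbf{r}_2\|_{2T}^2 - \beta_1 - \beta_2$ with $c_1, c_2 > 0$. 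Since $\mbf{r}_1, \mbf{r}_2 \in \ell_2$ we have $\|\mbf{r}_i\|_{2T} \leq \|\mbf{r}_i\|_2 < \infty$ uniformly in $T$, and $\beta_1, \beta_2$ are fixed, so the right-hand side is bounded independently of $T$; taking the supremum over $T$ then gives $\mbf{y}_1, \mbf{y}_2 \in \ell_2$ by \Cref{def:Lp_spaces}.

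The hard part is the bookkeeping in the Young-inequality step: the absorbing parameters must be chosen so that the cross terms arising from $\langle \mbf{r}_1, \mbf{y}_1\rangle_T$, $\langle \mbf{r}_2, \mbf{y}_2\rangle_T$, and the $2\delta\langle \mbf{r}_2, \mbf{y}_1\rangle_T$ produced by expanding $\|\mbf{r}_2 + \mbf{y}_1\|_{2T}^2$ all leave strictly positive residual coefficients $c_1, c_2$. Confirming that a feasible choice exists is the only genuinely delicate point, and it does exist precisely because $\delta, \varepsilon > 0$ furnish the needed slack; the remainder of the proof is direct substitution and the standard inequalities.
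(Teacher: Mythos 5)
Your proposal is correct and is essentially the canonical argument: the paper states \Cref{thrm:strong_passivity} without proof, citing its source, and the standard textbook proof is exactly what you give --- substitute the loop relations \(\mbf{u}_1 = \mbf{r}_1 - \mbf{y}_2\), \(\mbf{u}_2 = \mbf{r}_2 + \mbf{y}_1\) into the passivity and VSP inequalities, add so the cross terms \(\langle \mbf{y}_1, \mbf{y}_2\rangle_T\) cancel, expand \(\mleft\| \mbf{r}_2 + \mbf{y}_1 \mright\|_{2T}^2\) to surface \(\delta \mleft\| \mbf{y}_1 \mright\|_{2T}^2\), absorb the input cross terms by Young's inequality (a feasible choice of parameters exists precisely because \(\delta, \varepsilon > 0\), e.g.\ leaving residual coefficients \(\delta/2\) and \(\varepsilon/2\)), and take the supremum over \(T\) using \(\mleft\| \mbf{r}_i \mright\|_{2T} \leq \mleft\| \mbf{r}_i \mright\|_2\). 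The only tacit assumption, shared by the theorem statement itself, is well-posedness of the interconnection, i.e.\ that the loop equations admit solutions with all signals in \(\ell_{2e}\), which you invoke when asserting the truncated quantities are finite.
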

\subsection{Function Classes}
    Denote the set of continuously differentiable functions \mbox{$f: \mathbb{R}^n \rightarrow \mathbb{R}$} that are $m$-strongly convex and $L$-smooth with \(0 < m \leq L\) as \(\mathcal{F}_{m, L}\).
    \vspace{1pt}
    \begin{remark}
        Consider a function \(f \in \mathcal{F}_{m, L}\) and its unique global minimizer \(\mbf{x}^{*}\) such that \(\nabla f(\mbf{x}^\ast) = \mbf{0}\). The function \mbox{\(g(\mbf{x}) = f(\mbf{x}) - \frac{m}{2} \mleft\| \mbf{x} \mright\|_2^2\)} is convex and \(\mleft( L - m \mright)\)--smooth. The co-coercivity of \(\nabla g\) can be written as
        \begin{equation} \label{eqn:co_coercivity}
            \begin{split}
                \hspace{-5pt}
                \left\langle\mbf{x} - \mbf{x}^\ast, \nabla f(\mbf{x})\right\rangle 
                \geq 
                \frac{mL}{m + L}  \mleft\| \mbf{x} - \mbf{x}^\ast \mright\|_2^2 
                +
                \frac{1}{m + L}  \mleft\| \nabla f(\mbf{x}) \mright\|_2^2,
            \end{split}
        \end{equation}
        for all \(\mbf{x} \in \mathbb{R}^n\)~\cite[Proposition 5]{lessard_recht_iqc}.
    \end{remark}

    The inequality in \cref{eqn:co_coercivity} encloses \(\nabla f(\mbf{x})\) in the sector between two lines with slopes \(m\) and \(L\)~\cite{lessard_dissipativity}. More generally, the set of continuously differentiable functions with unique global minimizers satisfying \cref{eqn:co_coercivity} is denoted as \(\mathcal{S}_{m, L}\). This class of functions has sector-bounded gradients and contains non-convex functions as-well, meaning \(\mathcal{F}_{m, L} \subset \mathcal{S}_{m, L}\)~\cite{hu_lessard}. 
\subsection{Optimization Problem}
    Consider the unconstrained optimization problem
    \begin{equation} \label{eq:optimization_problem}
        \min_{\mbf{x} \in \mathbb{R}^{n}} f(\mbf{x}),
    \end{equation}
    where \( f \in \mathcal{S}_{m, L}\). The GD method with the update rule
    \begin{equation} \label{eq:GD}
        \mbf{x}(k+1) = \mbf{x}(k) - \alpha \nabla f(\mbf{x}(k))
    \end{equation}
    can be used to solve \eqref{eq:optimization_problem} using an initialization \(\mbf{x}(0)\in \mathbb{R}^{n}\) and a constant step size \(\alpha \in \mathbb{R}_{>0}\).
\subsection{Control Interpretation of Gradient Descent}\label{subsec:control_interpretation}
    The discrete-time feedback representation of GD is discussed in~\mbox{\cite[Section 2]{lessard_recht_iqc}}. By modifying the approach in~\cite{lessard_recht_iqc}, the GD method can be interpreted as a static memoryless nonlinearity \(\mbs{\Delta} = \nabla f\) in negative feedback with an LTI dynamical system \(\bm{\mathcal{G}}\), as shown in \Cref{fig:negative_feedback}. Henceforth, the notation \(\mbf{x}(k)\) is abbreviated to \(\mbf{x}^{k}\).
    \begin{remark}
        Similar to~\cite{hu_lessard}, it is assumed that \(\mbs{\Delta}\) in \Cref{fig:negative_feedback} maps \(\mbf{u}_2\) to \(\mbf{y}_2\) as \(\mbf{y}_{2}^{k} = \nabla f(\mbf{u}_{2}^{k} + \mbf{x}^\ast)\), such that for \(\mbf{u}_{2}^{k} = \mbf{0}\), \(\mbf{y}_{2}^{k} = \nabla f(\mbf{x}^\ast) = \mbf{0}\). 
        Using the shifted state \(\mbs{\xi}^{k} = \mbf{x}^{k} - \mbf{x}^\ast\), the LTI system \(\bm{\mathcal{G}}\) in \Cref{fig:negative_feedback} can be represented as
        \begin{subequations} \label{eq:G}
            \begin{align}
                \mbs{\xi}^{k+1} &= \mbf{A} \mbs{\xi}^{k} + \mbf{B} \mbf{u}_{1}^{k}, \label{eq:state_space_G} \\%
                \mbf{y}_{1}^{k} &= \mbf{C} \mbs{\xi}^{k} + \mbf{D} \mbf{u}_{1}^{k}. \label{eq:output_G}%
            \end{align}
        \end{subequations}
    \end{remark}

    The GD update rule in \eqref{eq:GD} can then be written in the form of \cref{eq:G} with minimal realization \(\mleft( \mbf{A}, \mbf{B}, \mbf{C}, \mbf{D} \mright) = \mleft( \eye, \alpha \eye, \eye, \mbf{0} \mright)\) as shown in \Cref{fig:gd_neg_feedback}. Additionally, the convergence rate of the GD method becomes analogous to the rate at which \mbox{\(\mbs{\xi}^{k} \to \mbf{0}\)}.
    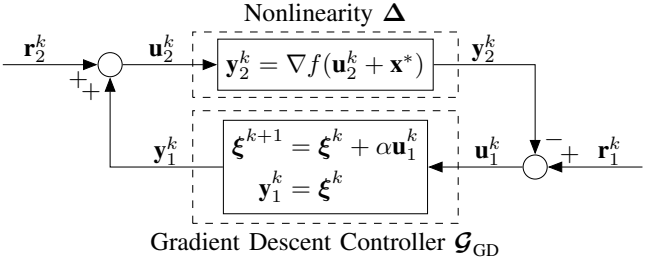
\begin{figure}[t]
        \centering
        \vspace{1pt}
    \resizebox{1.0\columnwidth}{!}{\begin{tikzpicture}[auto, node distance=2cm]

    \def\blockWidth{4em}                 
    \def\blockHeight{2em}                
    \def\blockVerticalSpacing{0.4cm}    

    \def\arrowTipLength{2mm}             
    \def\arrowTipWidth{1.5mm}            

    \def\dashedRectangleWidth{3.65cm}
    \def\dashedRectangleHeight{1.55cm}
    \def\dashedRectanglCenterXShift{0mm}
    \def\dashedRectanglCenterYShift{0cm}
    \def\dashedtextxhift{4mm}     
    
    \def\NdashedRectangleWidth{3.65cm}
    \def\NdashedRectangleHeight{0.9cm}
    \def\NdashedRectanglCenterXShift{0mm}
    \def\NdashedRectanglCenterYShift{0cm}
    \def\Ndashedtextxhift{5.1mm}                 

    \def\smallArrowLength{1.3cm}         
    \def\smallArrowStretch{1.5}          
    \def\smallArrowShrink{0.8}           

    \tikzset{     
        block/.style = {draw, fill=none, rectangle, 
                        minimum height=\blockHeight, minimum width=\blockWidth}
    }

    \tikzset{    
        sum/.style  = {draw, fill=none, circle, node distance=1cm},
    }

    \tikzset{    
        tmp/.style    = {coordinate}, 
        input/.style  = {coordinate},
        output/.style = {coordinate}
    }

    \tikzset{      
        pinstyle/.style = {pin edge={to-,thin}}
    }

    \tikzset{     
        myarrow/.style = {->, -{Triangle[length=\arrowTipLength, width=\arrowTipWidth]}}
    }

    \node [block] (Nonlinearity) {\(\mbf{y}_{2}^{k} = \nabla f(\mbf{u}_{2}^k + \mbf{x}^\ast)\)};
    \node [block, below=\blockVerticalSpacing of Nonlinearity.south, anchor=north](G) {        
        \(
            \begin{aligned}
                \mbs{\xi}^{k+1} &= \mbs{\xi}^{k} + \alpha \mbf{u}_{1}^{k}
                \\%
                \mbf{y}_{1}^{k}     &= \mbs{\xi}^{k}
            \end{aligned}
        \)
    };

    \node [tmp, right=\smallArrowLength of G.east] (G_in) {};
    \node [tmp, left=\smallArrowLength  of G.west] (G_out) {};

    \node [sum, left=\smallArrowLength of Nonlinearity.west] (sum_node_up){};
    \node [sum, right=\smallArrowLength of G.east]           (sum_node_down){};


    \draw [myarrow](G) -| node[pos=0.25, above, yshift=-0.09cm]{$\mbf{y}_1^k$} (sum_node_up)node[at end, left, yshift=-\arrowTipLength] {$+$};
    \draw [myarrow](sum_node_down) -- (G)node[at end, above, xshift=\smallArrowLength/2 + 5, yshift=-0.09cm] {$\mbf{u}_1^k$};
    \draw [myarrow](sum_node_up) -- (Nonlinearity)node[midway, above, xshift=-3, yshift=-0.09cm] {$\mbf{u}_2^k$};
    \draw [myarrow](Nonlinearity.east) -| node[pos=0.25, above, yshift=-0.09cm]{$\mbf{y}_2^k$}(sum_node_down)node[at end, right, yshift=\arrowTipLength] {$-$};

    \draw [myarrow] ([xshift=\smallArrowLength+5] sum_node_down.center) -- node[above, midway, xshift=\arrowTipLength, yshift=-0.09cm] {$\mbf{r}_1^k$}(sum_node_down) node[at end, above, xshift=\arrowTipLength+3, yshift=-2] {$+$};
    \draw [myarrow] ([xshift=-\smallArrowLength-5] sum_node_up.center) -- node[above, midway, xshift=-\arrowTipLength, yshift=-0.09cm] {$\mbf{r}_2^k$}(sum_node_up) node[at end, left, yshift=-\arrowTipLength ] {$+$};

    \draw[dashed] (G.center) ++(-\dashedRectangleWidth/2 - \dashedRectanglCenterXShift, -\dashedRectangleHeight/2 - \dashedRectanglCenterYShift) rectangle ++(\dashedRectangleWidth, \dashedRectangleHeight);
    \node[below, yshift= -\dashedRectangleHeight/2] at (G.center) {\textrm{Gradient Descent Controller }\(\bm{\mathcal{G}}_{\textrm{GD}}\)};

    \draw[dashed] (Nonlinearity.center) ++(-\NdashedRectangleWidth/2 - \NdashedRectanglCenterXShift, -\NdashedRectangleHeight/2 + \NdashedRectanglCenterYShift) rectangle ++(\NdashedRectangleWidth, \NdashedRectangleHeight);
    \node[below, yshift= \NdashedRectangleHeight/2 + \Ndashedtextxhift] at (Nonlinearity.center) {\textrm{Nonlinearity} \(\mbs{\Delta}\)}; 
\end{tikzpicture}}%

        \vspace{-22pt}
        \caption{The negative feedback interconnection of the GD controller, \(\bm{\mathcal{G}}_{\textrm{GD}}\), and the nonlinearity \(\mbs{\Delta}\), being the shifted gradient that maps zero inputs to zero outputs.}
        \label{fig:gd_neg_feedback}
        \vspace{-15pt}
    \end{figure}
    \section{Main Contribution} \label{sec:main_results}
\begin{figure}[t]
    \centering
    \resizebox{1.0\columnwidth}{!}{\begin{tikzpicture}[auto, node distance=2cm]

    \def\blockWidth{4em}                 
    \def\blockHeight{2em}                
    \def\blockVerticalSpacing{3.3cm}     

    \def\arrowTipLength{2mm}             
    \def\arrowTipWidth{1.5mm}            

    \def\dashedRectangleWidth{5.3cm}
    \def\dashedRectangleHeight{2.45cm}
    \def\dashedRectanglCenterXShift{2.2mm}
    \def\dashedRectanglCenterYShift{-0.45cm}
    \def\dashedtextxhift{5mm}        

    \def\NdashedRectangleWidth{5.3cm}
    \def\NdashedRectangleHeight{1.9cm}
    \def\NdashedRectanglCenterXShift{2.2mm}
    \def\NdashedRectanglCenterYShift{-0.48cm}
    \def\Ndashedtextxhift{0.5mm}                 

    \def\smallArrowLength{0.7cm}           
    \def\smallArrowStretch{1.5}          
    \def\smallArrowShrink{0.7}           

    \tikzset{     
        block/.style = {draw, fill=none, rectangle, 
                        minimum height=\blockHeight, minimum width=\blockWidth}
    }

    \tikzset{    
        sum/.style  = {draw, fill=none, circle, node distance=1cm},
    }

    \tikzset{    
        tmp/.style    = {coordinate}, 
        input/.style  = {coordinate},
        output/.style = {coordinate}
    }

    \tikzset{      
        pinstyle/.style = {pin edge={to-,thin}}
    }

    \tikzset{     
        myarrow/.style = {->, -{Triangle[length=\arrowTipLength, width=\arrowTipWidth]}}
    }

    \node [block] (Nonlinearity) {\(\mbf{y}_{2}^{k} = \nabla f(\mbf{u}_{2}^k + \mbf{x}^\ast)\)};
    \node [block, below=\blockVerticalSpacing of Nonlinearity.center, anchor=center](G) { 
        \(
            \begin{aligned}
                \mbs{\xi}^{k+1} &= \mbs{\xi}^{k} + \alpha \mbf{u}_{1}^{k}
                \\%
                \mbf{y}_{1}^{k}     &= \mbs{\xi}^{k}
            \end{aligned}
        \)
    };

    \node [block, below=\blockVerticalSpacing/6 of Nonlinearity.south, anchor=center] (D_sub) {\(D \eye\)};
    \node [block, above=\blockVerticalSpacing/6 of G.north, anchor=center] (D_add) {\(D \eye\)};

    \node [tmp, right=\smallArrowLength  of G.east] (D_add_in) {};
    \node [tmp, right=\smallArrowLength  of G.east, yshift=\blockVerticalSpacing] (D_sub_in) {};
    \node [tmp, left=4*\smallArrowLength of G.west] (G_out) {};

    \node [sum, right=\smallArrowLength*\smallArrowShrink of D_add_in] (sum_node_right){};
    \node [sum, left=\smallArrowLength of G.west] (sum_node_D_add){};
    \node [sum, left=\smallArrowLength*\smallArrowShrink of sum_node_D_add.west, yshift=\blockVerticalSpacing] (sum_node_up){};
    \node [sum, left=\smallArrowLength of G.west, yshift=\blockVerticalSpacing] (sum_node_D_sub){};

    \draw [myarrow](Nonlinearity.east) -| node[pos=0.25, yshift=-0.09cm]{$\mbf{y}^{k}_{2}$}(sum_node_right)node[at end, right, yshift=\arrowTipLength] {$-$};

    \draw[myarrow] (sum_node_right) -- (G)node[pos=0.5, xshift=5pt, yshift=+0.09cm]{$\mbf{u}^{k}_{1}$};

    \draw[myarrow] (D_add_in) |- (D_add) {};
\draw[myarrow] (D_sub_in) |- (D_sub) {};

    \draw[myarrow] (G) -- (sum_node_D_add)node[at end, right, yshift=\arrowTipLength] {$+$}node[at end, midway, xshift=2pt, yshift=0.09cm] {$\mbf{y}_1^k$};

    \draw[myarrow] (sum_node_D_sub) -- (Nonlinearity)node[midway, above, xshift=-3pt, yshift=-0.09cm] {$\mbf{u}^{k}_{2}$}; {};

    \draw [myarrow] (D_add) -| (sum_node_D_add) node[at end, right, yshift=\arrowTipLength] {$+$};
    \draw [myarrow] (D_sub) -| (sum_node_D_sub) node[at end, left, yshift=-\arrowTipLength] {$+$};

    \draw [myarrow] (sum_node_D_add) -| (sum_node_up) node[at end, left, yshift=-\arrowTipLength] {$+$};
    \draw [myarrow] (sum_node_up) -- (sum_node_D_sub) node[at end, left, yshift=-\arrowTipLength] {$+$};

    \draw [myarrow] ([xshift=\smallArrowLength*2] sum_node_right.center) -- node[below, midway, xshift=\arrowTipLength, yshift=0.09cm] {$\mbf{r}_1^k$}(sum_node_right) node[at end, above, xshift=\arrowTipLength+3, yshift=-2] {$+$};
    \draw [myarrow] ([xshift=-\smallArrowLength*2] sum_node_up.center) -- node[above, midway, xshift=-3pt, yshift=-0.09cm] {$\bar{\mbf{r}}_2^k$}(sum_node_up) node[at end, left, yshift=-\arrowTipLength] {$+$};

    \draw[dashed] (G.center) ++(-\dashedRectangleWidth/2 - \dashedRectanglCenterXShift, -\dashedRectangleHeight/2 - \dashedRectanglCenterYShift) rectangle ++(\dashedRectangleWidth, \dashedRectangleHeight);
    \draw[dashed] (Nonlinearity.center) ++(-\NdashedRectangleWidth/2 - \NdashedRectanglCenterXShift, -\NdashedRectangleHeight/2 + \NdashedRectanglCenterYShift) rectangle ++(\NdashedRectangleWidth, \NdashedRectangleHeight);

    \node[below, yshift= -\dashedRectangleHeight/2 + \dashedtextxhift] at (G.center) {\textrm{Modified Gradient Descent Controller }\(\bar{\bm{\mathcal{G}}}_{\textrm{GD}}\)};
    \node[below, yshift= \NdashedRectangleHeight/2 + \Ndashedtextxhift] at (Nonlinearity.center) {\textrm{Nonlinearity} \(\bar{\mbs{\Delta}}\) };
\end{tikzpicture}}%

    \vspace{-22pt}
    \caption{Loop transformation of the negative feedback interconnection representation of the GD method in \Cref{fig:gd_neg_feedback}. The loop transformation introduces a feedthrough term \(D\eye\), resulting in the modified GD controller \(\bar{\bm{\mathcal{G}}}_{\textrm{GD}}\), the nonlinearity \(\bar{\mbs{\Delta}}\), and \(\bar{\mbf{r}}_2^k = \mbf{r}_2^k - D\mbf{r}_1^k\).}
    \label{fig:loop_transformation}
    \vspace{-12pt}
\end{figure}
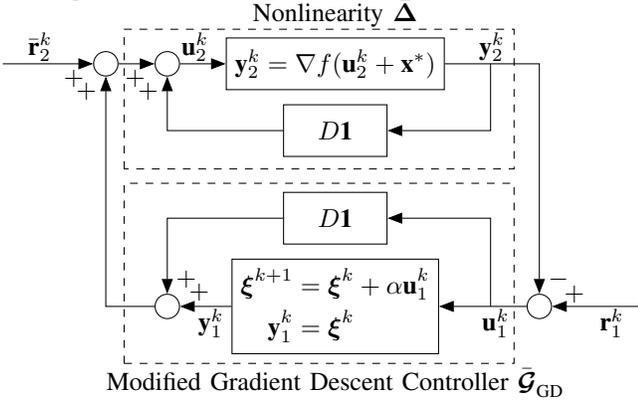
    The GD controller \(\bm{\mathcal{G}}_{\textrm{GD}}\), shown in \Cref{fig:gd_neg_feedback}, is strictly proper, since \(\mbf{D}=\mbf{0}\). However, a discrete-time system of the form \cref{eq:G} with \(\mbf{D} = \mbf{0}\) can never be passive~\mbox{\cite[Remark 2.7]{Byrnes_Lin_Losslessness}}. Therefore, to analyze the passivity of \(\bm{\mathcal{G}}_{\textrm{GD}}\), the loop transformation in \Cref{fig:loop_transformation} is performed, effectively introducing a constant feedthrough term \(\mbf{D} = D\eye\), where \(D \in \mathbb{R}_{>0}\). This feedthrough term results in the modified GD controller \(\bar{\bm{\mathcal{G}}}_{\textrm{GD}}\) with a minimal realization \(\mleft( \mbf{A}, \mbf{B}, \mbf{C}, \mbf{D} \mright) = \mleft( \eye, \alpha \eye, \eye, D\eye \mright)\). This loop transformation does not change the closed-loop dynamics of the system, meaning \Cref{fig:loop_transformation} still represents the GD method with update rule \cref{eq:GD}. Within the context of convex constrained optimization problems, similar loop transformation techniques have been used in~\cite{Notarnicola_passivity_admm,simpsons_loop_trans_pd_alg} to analyze the stability of discrete-time ADMM and primal-dual algorithms, respectively. Herein, the sufficient amount of the feedthrough term \(D\) required to render \(\bar{\bm{\mathcal{G}}}_{\textrm{GD}}\) passive is shown to be directly related to the step size \(\alpha\).
    \begin{lemma}\label{lemma:passivity_of_gd}
        The modified GD controller \(\bar{\bm{\mathcal{G}}}_{\textrm{GD}}\) in \Cref{fig:loop_transformation} is passive if and only if \(D \geq \alpha / 2 > 0\).
    \end{lemma}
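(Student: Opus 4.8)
The plan is to invoke \Cref{lemma:positive_real} together with \Cref{remark:positive_real_is_passive}, which reduce the passivity of the LTI system \(\bar{\bm{\mathcal{G}}}_{\textrm{GD}}\) to the existence of a matrix \(\mbf{P} = \mbf{P}^{\trans} \succ 0\) satisfying the positive-real LMI in \cref{eqn:positive_real}. First I would substitute the minimal realization \(\mleft( \mbf{A}, \mbf{B}, \mbf{C}, \mbf{D} \mright) = \mleft( \eye, \alpha \eye, \eye, D\eye \mright)\) into each block. Since \(\mbf{A} = \eye\), the upper-left block \(\mbf{A}^{\trans}\mbf{P}\mbf{A} - \mbf{P}\) collapses to \(\mbf{0}\), the off-diagonal block becomes \(\alpha\mbf{P} - \eye\), and the lower-right block becomes \(\alpha^2\mbf{P} - 2D\eye\). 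Thus the LMI takes the form
\begin{equation*}
    \begin{bmatrix}
        \mbf{0} & \alpha\mbf{P} - \eye \\
        \alpha\mbf{P} - \eye & \alpha^2\mbf{P} - 2D\eye
    \end{bmatrix}
    \preceq 0.
\end{equation*}

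The key observation---and the step I expect to carry the argument---is that a symmetric block matrix with a vanishing \((1,1)\) block can be negative semidefinite only if its off-diagonal block vanishes as well. Indeed, evaluating the associated quadratic form on \(\mleft( \mbf{x}^{\trans}, \mbf{y}^{\trans} \mright)^{\trans}\) leaves a term \(2\mbf{x}^{\trans}\mleft( \alpha\mbf{P} - \eye \mright)\mbf{y}\) that is linear in \(\mbf{x}\); unless \(\alpha\mbf{P} - \eye = \mbf{0}\), this term is unbounded above, contradicting negative semidefiniteness. Hence the LMI forces \(\mbf{P} = \tfrac{1}{\alpha}\eye\), which is well-defined and positive definite precisely because \(\alpha > 0\).

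With \(\mbf{P}\) pinned down, the lower-right block reduces to \(\mleft( \alpha - 2D \mright)\eye\), and the remaining requirement \(\mleft( \alpha - 2D \mright)\eye \preceq 0\) is equivalent to \(D \geq \alpha/2\). Conversely, I would verify that the choice \(\mbf{P} = \tfrac{1}{\alpha}\eye\) together with \(D \geq \alpha/2\) renders the block matrix \(\preceq 0\) (in fact block-diagonal with a zero block and a negative semidefinite block), establishing sufficiency. Combining this with the standing assumptions \(\alpha > 0\) and \(D > 0\) yields the claimed equivalence \(D \geq \alpha/2 > 0\). The only subtlety to handle with care is the zero-\((1,1)\)-block argument; the remainder is direct substitution into \cref{eqn:positive_real}.
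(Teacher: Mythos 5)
Your proposal is correct and follows the paper's proof essentially step for step: substitute the realization \(\mleft( \eye, \alpha\eye, \eye, D\eye \mright)\) into the positive-real LMI of \Cref{lemma:positive_real}, conclude that the off-diagonal block forces \(\alpha\mbf{P} - \eye = \mbf{0}\), and read off \(D \geq \alpha/2\) from the remaining block \(\alpha^2\mbf{P} - 2D\eye \preceq 0\), invoking \Cref{remark:positive_real_is_passive} to pass from positive realness to passivity. The only difference is cosmetic: where the paper cites the Schur complement lemma to handle the zero \((1,1)\) block, you justify the vanishing of the off-diagonal block directly from the quadratic form, which is if anything slightly cleaner, since the textbook Schur complement criterion assumes a definite \((1,1)\) block and here one needs its generalized (range-condition) form---precisely the fact your cross-term argument establishes.
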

    \begin{proof}
        Substituting the minimal realization of \(\bar{\bm{\mathcal{G}}}_{\textrm{GD}}\) into \cref{eqn:positive_real} yields
        \begin{equation}
            \mbf{M} =
            \begin{bmatrix}
                \mbf{0}                & \alpha \mbf{P} - \eye \\
                \alpha \mbf{P} - \eye  & \alpha^2 \mbf{P} - 2D\eye
            \end{bmatrix}.
        \end{equation}
        From the Schur complement lemma, \(\mbf{M} \preceq 0\) if and only if
        \begin{align}\label{eqn:pr_cond_1}
            \alpha \mbf{P} - \eye &= \mbf{0}, & \alpha^2 \mbf{P} - 2D\eye &\preceq 0.
        \end{align} 
        From \cref{eqn:pr_cond_1}, it follows that \(\mbf{P} = \frac{1}{\alpha}\) and~\mbox{\(D \geq \alpha / 2\)}, for \(\alpha \in \mathbb{R}_{>0}\). Finally, \Cref{lemma:positive_real} in tandem with \Cref{remark:positive_real_is_passive} concludes that \(\bar{\bm{\mathcal{G}}}_{\textrm{GD}}\) is passive if and only if \(D \geq \alpha / 2\), for \(\alpha \in \mathbb{R}_{>0}\). \hspace*{\fill}~\QED\par\endtrivlist\unskip
    \end{proof}

    Next, it is shown that for \(f \in \mathcal{S}_{m, L}\), the sector-bound condition in \cref{eqn:co_coercivity} leads to \(\mbs{\Delta}\) being VSP\@.
    \begin{lemma}\label{lemma:delta_vsp}
        Consider a function \(f \in \mathcal{S}_{m, L}\) with \(m, L \in \mathbb{R}_{>0}\), such that \(m \leq L\),
         and its unique global minimizer \(\mbf{x}^\ast\). The operator \(\mbs{\Delta}\) in \Cref{fig:gd_neg_feedback} mapping \(\mbf{u}_2^k\) to \(\mbf{y}_2^k\) as \(\mbf{y}_{2}^{k} = \nabla f(\mbf{u}_{2}^k + \mbf{x}^\ast)\) is VSP with
        \begin{align}\label{eqn:epsilon_delta}
            \varepsilon &= \frac{1}{m + L} > 0, & \delta &= \frac{mL}{m + L} > 0.%
        \end{align}
    \end{lemma}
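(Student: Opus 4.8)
The plan is to derive the very strict passivity inequality directly from the co-coercivity bound in \cref{eqn:co_coercivity}, which is precisely the pointwise (per-time-step) counterpart of the estimate required by part \textit{iii}) of \Cref{def:passivity}. First I would substitute \(\mbf{x} = \mbf{u}_2^k + \mbf{x}^\ast\) into \cref{eqn:co_coercivity}. Since \(\mbf{u}_2 \in \ell_{2e}\) is arbitrary, \(\mbf{u}_2^k\) ranges over all of \(\mathbb{R}^n\) at each \(k\), so this substitution is admissible and \cref{eqn:co_coercivity} holds for every such choice. Under this change of variable, \(\mbf{x} - \mbf{x}^\ast = \mbf{u}_2^k\) and, by the definition of \(\mbs{\Delta}\), \(\nabla f(\mbf{x}) = \nabla f(\mbf{u}_2^k + \mbf{x}^\ast) = \mbf{y}_2^k\). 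Hence \cref{eqn:co_coercivity} becomes the per-sample estimate
\begin{equation*}
    \mleft(\mbf{u}_2^k\mright)^{\trans} \mbf{y}_2^k
    \geq
    \frac{mL}{m+L} \mleft\| \mbf{u}_2^k \mright\|_2^2
    + \frac{1}{m+L} \mleft\| \mbf{y}_2^k \mright\|_2^2.
\end{equation*}

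Next I would sum this inequality over the truncation window \(\mathcal{T} = \mleft\{0, 1, \hdots, T-1\mright\}\). Summing the left-hand side reproduces the truncated inner product \(\langle \mbf{u}_2, \mbf{y}_2\rangle_T\), while summing the two right-hand terms reproduces the truncated norms \(\mleft\| \mbf{u}_2 \mright\|_{2T}^2\) and \(\mleft\| \mbf{y}_2 \mright\|_{2T}^2\), directly from the definitions of the truncated inner product and of the \(\ell_{2e}\) norm. This yields
\begin{equation*}
    \langle \mbf{u}_2, \mbf{y}_2\rangle_T
    \geq
    \frac{mL}{m+L} \mleft\| \mbf{u}_2 \mright\|_{2T}^2
    + \frac{1}{m+L} \mleft\| \mbf{y}_2 \mright\|_{2T}^2,
\end{equation*}
for all \(\mbf{u}_2 \in \ell_{2e}\) and all \(T \in \mathbb{Z}_{>0}\).

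Finally, I would match this against the VSP template of \Cref{def:passivity} with \(\beta = 0\), \(\delta = \frac{mL}{m+L}\), and \(\varepsilon = \frac{1}{m+L}\). Since \(\beta = 0 \in \mathbb{R}_{\leq 0}\), and \(m, L \in \mathbb{R}_{>0}\) ensure \(\delta, \varepsilon \in \mathbb{R}_{>0}\), all three requirements are satisfied, establishing that \(\mbs{\Delta}\) is VSP with the claimed constants in \cref{eqn:epsilon_delta}. I do not anticipate any substantive obstacle: the argument is a single substitution followed by a summation. The only care required is notational, namely confirming that the Euclidean inner product in \cref{eqn:co_coercivity} aggregates into the truncated inner product, and that the substitution is valid for an arbitrary \(\mbf{u}_2 \in \ell_{2e}\). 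It is worth noting that \(\beta\) can be taken to be exactly \(0\) rather than strictly negative, reflecting that \(\mbs{\Delta}\) carries no bias term in its storage/passivity estimate.
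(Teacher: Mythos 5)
Your proof is correct and follows essentially the same route as the paper's: substitute \(\mbf{x} = \mbf{u}_2^k + \mbf{x}^\ast\) into the co-coercivity bound \cref{eqn:co_coercivity} to obtain the per-sample sector inequality, sum over \(\mathcal{T}\) to recover the truncated inner product and norms, and identify \(\beta = 0\), \(\delta = mL/(m+L)\), \(\varepsilon = 1/(m+L)\) against \Cref{def:passivity}. Your version even makes the substitution step slightly more explicit than the paper does, but there is no substantive difference.
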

    \vspace{5pt}
    \begin{proof}
        For a given discrete time interval \(\mathcal{T} = \mleft\{0, 1, \hdots, T-1 \mright\}\), with \(T \in \mathbb{Z}_{>0}\), consider the iterates \(\mleft\{ \mbf{y}_{2}^k \mright\}\) obtained from the operator \(\mbs{\Delta}\), such that \(\mbf{y}_{2}^k = \nabla f(\mbf{u}_{2}^k + \mbf{x}^\ast)\). For \(f \in \mathcal{S}_{m, L}\), the sector-bound inequality in \cref{eqn:co_coercivity} can be written as
        \begin{equation} \label{eqn:sector_bound_k}
            \begin{split}
                \langle \mbf{u}_{2}^k, \mbf{y}_{2}^k\rangle 
                \geq 
                \frac{mL}{m + L}  \lVert \mbf{u}_{2}^k \rVert_2^2 
                +
                \frac{1}{m + L}  \lVert \mbf{y}_{2}^k \rVert_2^2,
            \end{split}
        \end{equation}
        for all \(\mbf{u}_{2}^k \in \mathbb{R}^n\) and \(k \in \mathcal{T}\). Summing \cref{eqn:sector_bound_k} over \(k \in \mathcal{T}\) and defining \(\varepsilon\) and \(\delta\) as per \cref{eqn:epsilon_delta}, it follows that
        \begin{equation}
            \begin{split}
                \sum_{k \in \mathcal{T}} \langle \mbf{u}_{2}^k, \mbf{y}_{2}^k\rangle =
                \left\langle \mbf{y}_{2}, \mbf{u}_{2}\right\rangle_{T}
                &\geq 
                \delta \mleft\| \mbf{u}_{2} \mright\|_{2T}^2 
                +
                \varepsilon \mleft\| \mbf{y}_{2} \mright\|_{2T}^2,
            \end{split}
        \end{equation}
        for all \(\mbf{u} \in \ell_{2e}\) and \(T \in \mathbb{Z}_{>0}\). Therefore, \(\mbs{\Delta}\) is VSP with \(\varepsilon, \delta \in \mathbb{R}_{>0}\) and \(\beta = 0\). \hspace*{\fill}~\QED\par\endtrivlist\unskip  
    \end{proof}

    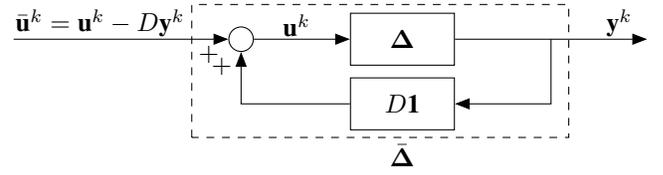
\begin{figure}
        \centering
        \vspace{3pt}
    \resizebox{1.0\columnwidth}{!}{\begin{tikzpicture}[auto, node distance=2cm]

    \def\blockWidth{4em}                 
    \def\blockHeight{2em}                
    \def\blockVerticalSpacing{3.5cm}     

    \def\arrowTipLength{2mm}             
    \def\arrowTipWidth{1.5mm}            

    \def\NdashedRectangleWidth{5.1cm}
    \def\NdashedRectangleHeight{1.8cm}
    \def\NdashedRectanglCenterXShift{3mm}
    \def\NdashedRectanglCenterYShift{-0.43cm}
    \def\Ndashedtextxhift{4mm}                 

    \def\smallArrowLength{1.3cm}           
    \def\smallArrowStretch{1.5}          
    \def\smallArrowShrink{0.8}           

    \tikzset{     
        block/.style = {draw, fill=none, rectangle, 
                        minimum height=\blockHeight, minimum width=\blockWidth}
    }

    \tikzset{    
        sum/.style  = {draw, fill=none, circle, node distance=1cm},
    }

    \tikzset{    
        tmp/.style    = {coordinate}, 
        input/.style  = {coordinate},
        output/.style = {coordinate}
    }

    \tikzset{      
        pinstyle/.style = {pin edge={to-,thin}}
    }

    \tikzset{     
        myarrow/.style = {->, -{Triangle[length=\arrowTipLength, width=\arrowTipWidth]}}
    }

    \node [block] (Nonlinearity) {\(\mbs{\Delta}\)};
    \node [block, below=\blockVerticalSpacing/4  of Nonlinearity.center, anchor=center] (D_sub) {\(D\eye\)};

    \node [tmp, right=\smallArrowLength  of Nonlinearity.east] (D_sub_in) {};
    \node [tmp, left=3.5*\smallArrowLength of Nonlinearity.west] (G_out) {};
    \node [tmp, right=2*\smallArrowLength of Nonlinearity.east] (G_in) {};

    \node [sum, left=\smallArrowLength of Nonlinearity.west] (sum_node_D_sub){};

    \draw[myarrow] (D_sub_in) |- (D_sub) {};

    \draw[myarrow] (sum_node_D_sub) -- (Nonlinearity)node[midway, above, xshift=-2pt, yshift=-0.09cm] {$\mbf{u}^{k}$}; {};

    \draw [myarrow] (D_sub) -| (sum_node_D_sub) node[at end, left, yshift=-\arrowTipLength] {$+$};

    \draw [myarrow] (G_out) -- node[midway, xshift=-8pt, yshift=-0.09cm]{$\bar{\mbf{u}}^{k} = \mbf{u}^{k} - D \mbf{y}^{k}$} (sum_node_D_sub) node[at end, left, yshift=-\arrowTipLength] {$+$};

    \draw [myarrow] (Nonlinearity) -- (G_in) node[at end, xshift=-\arrowTipLength-5pt, yshift=-0.09cm] {$\mbf{y}^{k}$};

    \draw[dashed] (Nonlinearity.center) ++(-\NdashedRectangleWidth/2 - \NdashedRectanglCenterXShift, -\NdashedRectangleHeight/2 + \NdashedRectanglCenterYShift) rectangle ++(\NdashedRectangleWidth, \NdashedRectangleHeight);
    \node[below, yshift= -\NdashedRectangleHeight/2 - \Ndashedtextxhift] at (Nonlinearity.center) {\(\bar{\mbs{\Delta}}\)};
\end{tikzpicture}}%

        \vspace{-18pt}
        \caption{A discrete-time system \(\bar{\mbs{\Delta}}\), composed of the positive feedback interconnection of a VSP system \(\mbs{\Delta}\), and the feedthrough term \(D\eye\).}
        \label{fig:feedback_interconnection}
        \vspace{-15pt}
    \end{figure}
    What remains is to derive an upper bound on \(D\) to ensure the positive feedback interconnection of \(\mbs{\Delta}\) and \(D\eye\) as per \Cref{fig:feedback_interconnection} results in a VSP system, \(\bar{\mbs{\Delta}}\).
    \begin{lemma} \label{lemma:upper_bound_D_vsp}
        Consider a VSP system \(\mbs{\Delta}\) with \(\varepsilon\) and \(\delta \) in \cref{eqn:epsilon_delta} and \(\beta = 0\). Given a feedthrough term \(D\eye\) with \(D \in \mathbb{R}_{>0}\), if \(D < 1/L\), the positive feedback interconnection of \(\mbs{\Delta}\) and \(D\eye\) results in a VSP system \(\bar{\mbs{\Delta}}\). 
    \end{lemma}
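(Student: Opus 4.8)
The plan is to propagate the VSP supply rate of \(\mbs{\Delta}\) through the positive-feedback loop of \Cref{fig:feedback_interconnection} and then read off the new excess parameters \(\bar{\delta}\) and \(\bar{\varepsilon}\) for \(\bar{\mbs{\Delta}}\). First I would fix the interconnection relations: writing \(\mbf{u}_2, \mbf{y}_2\) for the input and output of the inner block \(\mbs{\Delta}\) and \(\bar{\mbf{u}}, \bar{\mbf{y}}\) for the external input and output of \(\bar{\mbs{\Delta}}\), the positive feedback around \(\mbs{\Delta}\) gives \(\mbf{y}_2 = \bar{\mbf{y}}\) and \(\mbf{u}_2 = \bar{\mbf{u}} + D\mbf{y}_2\), equivalently \(\bar{\mbf{u}} = \mbf{u}_2 - D\mbf{y}_2\). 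Substituting this into the truncated inner product immediately yields the supply-rate identity
\[
    \langle \bar{\mbf{u}}, \bar{\mbf{y}} \rangle_T = \langle \mbf{u}_2, \mbf{y}_2 \rangle_T - D \mleft\| \mbf{y}_2 \mright\|^2_{2T}.
\]

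Next I would invoke the VSP bound for \(\mbs{\Delta}\) from \Cref{lemma:delta_vsp} (with \(\beta = 0\)), namely \(\langle \mbf{u}_2, \mbf{y}_2 \rangle_T \geq \delta \mleft\| \mbf{u}_2 \mright\|^2_{2T} + \varepsilon \mleft\| \mbf{y}_2 \mright\|^2_{2T}\), and then re-express everything in terms of the external signals using \(\mbf{u}_2 = \bar{\mbf{u}} + D\bar{\mbf{y}}\) and \(\mbf{y}_2 = \bar{\mbf{y}}\). Expanding \(\mleft\| \mbf{u}_2 \mright\|^2_{2T} = \mleft\| \bar{\mbf{u}} \mright\|^2_{2T} + 2D\langle\bar{\mbf{u}},\bar{\mbf{y}}\rangle_T + D^2\mleft\| \bar{\mbf{y}} \mright\|^2_{2T}\) produces a cross term \(2\delta D \langle \bar{\mbf{u}}, \bar{\mbf{y}}\rangle_T\) on the right-hand side, which I would collect with the left-hand side to obtain
\[
    (1 - 2\delta D)\langle \bar{\mbf{u}}, \bar{\mbf{y}} \rangle_T \geq \delta \mleft\| \bar{\mbf{u}} \mright\|^2_{2T} + \mleft( \delta D^2 - D + \varepsilon \mright) \mleft\| \bar{\mbf{y}} \mright\|^2_{2T}.
\]

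To conclude VSP I must divide by \(1 - 2\delta D\), so the two load-bearing facts are that this coefficient is positive and that the \(\mleft\| \bar{\mbf{y}} \mright\|^2_{2T}\) coefficient is positive. For the former, \(D < 1/L\) together with \(m \le L\) gives \(2\delta D = 2\tfrac{mL}{m+L}D < \tfrac{2m}{m+L} \le 1\), so \(1 - 2\delta D > 0\). The main obstacle, and the only genuinely delicate step, is the sign of \(\delta D^2 - D + \varepsilon\). Substituting \(\delta\) and \(\varepsilon\) from \cref{eqn:epsilon_delta} and clearing the common factor \(m + L\), I expect the quadratic in \(D\) to factor as
\[
    mL D^2 - (m+L)D + 1 = (mD - 1)(LD - 1),
\]
so that this coefficient has the sign of \((mD-1)(LD-1)\). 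Since \(D < 1/L \le 1/m\) makes both factors negative, their product is positive, and hence \(\bar{\varepsilon} = (\delta D^2 - D + \varepsilon)/(1 - 2\delta D) > 0\); likewise \(\bar{\delta} = \delta/(1 - 2\delta D) > 0\). With \(\bar{\beta} = 0\), dividing through establishes the VSP inequality for \(\bar{\mbs{\Delta}}\), completing the proof. I would keep the factorization front and centre, since it is exactly what pins the admissible range of \(D\) to \(\mleft(0, 1/L\mright)\) and dovetails with the feedthrough bound \(D \ge \alpha/2\) from \Cref{lemma:passivity_of_gd}.
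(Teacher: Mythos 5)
Your proposal is correct and follows essentially the same route as the paper: both substitute \(\bar{\mbf{u}} = \mbf{u} - D\mbf{y}\) into the VSP inequality of \(\mbs{\Delta}\), collect the cross term to reach \(\mleft(1 - 2\delta D\mright)\left\langle \bar{\mbf{u}}, \mbf{y} \right\rangle_T \geq \delta \mleft\| \bar{\mbf{u}} \mright\|_{2T}^2 + \mleft(\varepsilon - D + \delta D^2\mright)\mleft\| \mbf{y} \mright\|_{2T}^2\), and then verify positivity of both coefficients for \(D < 1/L\). The only cosmetic difference is that you factor the quadratic as \((mD-1)(LD-1)\) where the paper identifies its roots \(1/L\) and \(1/m\) and the vertex \(D^\ast = 1/(2\delta)\), which is the same analysis.
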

    \begin{proof}
        From the VSP property of \(\mbs{\Delta}\), it follows that
        \begin{align} \label{eqn:vsp_delta}
            \left\langle \mbf{u}, \mbf{y} \right\rangle_{T} 
            &\geq 
            \delta \mleft\| \mbf{u} \mright\|_{2T}^2 
            +
            \varepsilon \mleft\| \mbf{y} \mright\|_{2T}^2, \quad \forall \mbf{u} \in \ell_{2e}, \, \forall T \in \mathbb{Z}_{>0}.
        \end{align}
        Adding and subtracting the terms \(\delta D^2 \mleft\| \mbf{y} \mright\|_{2T}^2\) and \(2\delta D \left\langle \mbf{u}, \mbf{y} \right\rangle_{T}\) from \cref{eqn:vsp_delta} and completing the square, it follows that
        \begin{align}\label{eqn:vsp_delta_added_and_subtracted}
            \hspace{-5pt}
            \left\langle \mbf{u}, \mbf{y} \right\rangle_{T} 
            &\geq 
            \delta \mleft\| \bar{\mbf{u}} \mright\|_{2T}^2 
            +
            \mleft( \varepsilon - \delta D^2 \mright) \mleft\| \mbf{y} \mright\|_{2T}^2
            +
            2\delta D \left\langle \mbf{u}, \mbf{y} \right\rangle_{T},
        \end{align}
        where \(\bar{\mbf{u}}^{k} = \mbf{u}^{k} - D \mbf{y}^{k}\). Rearranging \cref{eqn:vsp_delta_added_and_subtracted} and collecting similar terms yields
        \begin{equation}
            \mleft( 1 - 2\delta D \mright) \left\langle \mbf{u}, \mbf{y} \right\rangle_{T} 
            \geq 
            \delta \mleft\| \bar{\mbf{u}} \mright\|_{2T}^2 
            +
            \mleft( \varepsilon - \delta D^2 \mright) \mleft\| \mbf{y} \mright\|_{2T}^2.
        \end{equation}
        For \(1 - 2\delta D  > 0\), it follows that 
        \begin{equation}\label{eqn:devided_by_1_minus_2deltaD}
            \left\langle \mbf{u}, \mbf{y} \right\rangle_{T} 
            \geq 
            \frac{\delta}{1 - 2\delta D} \mleft\| \bar{\mbf{u}} \mright\|_{2T}^2 
            +
            \frac{\mleft( \varepsilon - \delta D^2 \mright)}{1 - 2\delta D}
            \mleft\| \mbf{y} \mright\|_{2T}^2.
        \end{equation}
        \\[-10pt]
        Subtracting \(D \mleft\| \mbf{y} \mright\|_{2T}^2\) from both sides of \cref{eqn:devided_by_1_minus_2deltaD} and combining similar terms results in
        \begin{align}
            \left\langle \bar{\mbf{u}}, \mbf{y} \right\rangle_{T} 
            &\geq 
                \frac{\delta}{1 - 2\delta D} \mleft\| \bar{\mbf{u}} \mright\|_{2T}^2 
                +
                \frac{\mleft( \varepsilon - D +\delta D^2 \mright)}{1 - 2\delta D}
                \mleft\| \mbf{y} \mright\|_{2T}^2
            \\%
            &=
            \bar{\delta} \mleft\| \bar{\mbf{u}} \mright\|_{2T}^2
            +
            \bar{\varepsilon} \mleft\| \mbf{y} \mright\|_{2T}^2,
        \end{align}
        where \(\bar{\delta} = \delta / (1 - 2\delta D)\) and \(\bar{\varepsilon} = (\varepsilon - D + \delta D^2) / (1 - 2\delta D)\). For \(\bar{\mbs{\Delta}}\) to be VSP with \(\bar{\varepsilon}, \bar{\delta} \in \mathbb{R}_{>0}\), it is required that
        \begin{align}\label{eqn:upper_bound_D}
            D &< \frac{1}{2\delta}, & 0 &< \varepsilon - D + \delta D^2.
        \end{align}
        Note, \(D^\ast = 1/(2\delta)\) is the unique global minimum of the quadratic equation \(0 = \varepsilon - D + \delta D^2\).
        Substituting \mbox{\(\varepsilon = 1/(m + L)\)} and \(\delta = mL/(m + L)\) into \cref{eqn:upper_bound_D}, it follows that
        \begin{align}\label{eqn:upper_bound_D_1_2}
            D &< \frac{m + L}{2mL}, &
            0 &< \frac{1}{m + L} - D + \frac{mL}{m + L}D^2.
        \end{align}  
        The roots of the quadratic equation \cref{eqn:upper_bound_D_1_2} are \(r_1 = 1/L\) and \(r_2 = 1/m\). For \(m \leq L\), it follows that \(r_1 \leq D^\ast \leq r_2\), where equality is achieved for \(m = L\). For \(D < r_1 = 1/L\), both inequalities in \cref{eqn:upper_bound_D_1_2} are satisfied, and therefore, the positive feedback interconnection of \(\mbs{\Delta}\) and \(D\eye\) results in a VSP system \(\bar{\mbs{\Delta}}\), with \(\bar{\varepsilon}, \bar{\delta} \in \mathbb{R}_{>0}\). \hspace*{\fill}~\QED\par\endtrivlist\unskip 
    \end{proof}
    \begin{lemma}\label{lemma:upper_bound_D_ISP}
        Consider a VSP system \(\mbs{\Delta}\) with \(\varepsilon\) and \(\delta \) in \cref{eqn:epsilon_delta} and \(\beta = 0\). Given a feedthrough term \(D\eye\) with \(D \in \mathbb{R}_{>0}\), if \(D = 1/L\) and \(m < L\), the positive feedback interconnection of \(\mbs{\Delta}\) and \(D\eye\) results in an ISP system \(\bar{\mbs{\Delta}}\). 
    \end{lemma}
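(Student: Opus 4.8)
The plan is to reuse, almost verbatim, the completing-the-square computation from the proof of \Cref{lemma:upper_bound_D_vsp}, which made no assumption beyond $D < 1/(2\delta)$ until its very last step. Starting from the VSP inequality $\langle \mbf{u}, \mbf{y}\rangle_T \geq \delta\|\mbf{u}\|_{2T}^2 + \varepsilon\|\mbf{y}\|_{2T}^2$, I would again add and subtract $\delta D^2\|\mbf{y}\|_{2T}^2$ and $2\delta D\langle\mbf{u},\mbf{y}\rangle_T$, complete the square in $\bar{\mbf{u}}^k = \mbf{u}^k - D\mbf{y}^k$, divide through by $1 - 2\delta D$, and subtract $D\|\mbf{y}\|_{2T}^2$ from both sides. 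This yields the same supply-rate inequality for the positive-feedback interconnection, namely $\langle\bar{\mbf{u}},\mbf{y}\rangle_T \geq \bar{\delta}\|\bar{\mbf{u}}\|_{2T}^2 + \bar{\varepsilon}\|\mbf{y}\|_{2T}^2$, with $\bar{\delta} = \delta/(1-2\delta D)$ and $\bar{\varepsilon} = (\varepsilon - D + \delta D^2)/(1 - 2\delta D)$.

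The crucial difference from \Cref{lemma:upper_bound_D_vsp} is the evaluation of these constants at the boundary value $D = 1/L$. Since $D = 1/L$ is precisely one of the two roots $r_1 = 1/L$, $r_2 = 1/m$ of the numerator quadratic $\varepsilon - D + \delta D^2 = 0$ identified in that proof, I expect $\bar{\varepsilon} = 0$ exactly, which costs the interconnection its output-strictness and leaves only input-strictness. Substituting $\varepsilon = 1/(m+L)$ and $\delta = mL/(m+L)$ confirms that the numerator of $\bar{\varepsilon}$ collapses to zero, so the inequality reduces to $\langle\bar{\mbf{u}},\mbf{y}\rangle_T \geq \bar{\delta}\|\bar{\mbf{u}}\|_{2T}^2$, which is exactly the ISP condition for $\bar{\mbs{\Delta}}$ (with input $\bar{\mbf{u}}$, $\beta = 0$, and input-strict constant $\bar{\delta}$), provided $\bar{\delta} > 0$.

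The point requiring the most care---and the reason the hypothesis $m < L$ cannot be relaxed to $m \leq L$---is verifying that $\bar{\delta}$ is well-defined and strictly positive, i.e.\ that $1 - 2\delta D > 0$ so that neither the division step nor the sign of $\bar{\delta}$ is compromised. Evaluating at $D = 1/L$ gives $1 - 2\delta D = (L-m)/(m+L)$ and hence $\bar{\delta} = mL/(L-m)$, both strictly positive exactly when $m < L$. When $m = L$ the roots coincide at $D^\ast = 1/(2\delta) = 1/L$, the denominator vanishes, and the argument breaks down; this is why the strict separation of the sector bounds is essential here, whereas it was not needed in \Cref{lemma:upper_bound_D_vsp}. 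Apart from this boundary check, no genuinely new estimate is required, so I anticipate the proof to be short and to lean entirely on the algebra already established.
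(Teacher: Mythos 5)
Your proposal is correct and takes essentially the same route as the paper: the paper's proof likewise reuses the completing-the-square algebra of \Cref{lemma:upper_bound_D_vsp} and imposes \(\bar{\varepsilon} = 0\) with \(\bar{\delta} \in \mathbb{R}_{>0}\), which forces \(D = 1/L\) (the root of \(\varepsilon - D + \delta D^2 = 0\) lying below \(1/(2\delta)\)) together with \(m < L\). Your explicit boundary check \(1 - 2\delta D = (L-m)/(m+L)\) and \(\bar{\delta} = mL/(L-m)\) merely spells out what the paper leaves implicit in its appeal to the conditions \cref{eqn:upper_bound_D_ISP}.
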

    \begin{proof}
        The proof follows similar to \Cref{lemma:upper_bound_D_vsp}, with the only difference being that for \(\bar{\mbs{\Delta}}\) to be ISP, with \(\bar{\delta} \in \mathbb{R}_{>0}\) and \(\bar{\varepsilon} = 0\), it is required that
            \begin{align} \label{eqn:upper_bound_D_ISP}
                D &< \frac{m + L}{2mL},
                &
                0 &= \frac{1}{m + L} - D + \frac{mL}{m + L}D^2.
            \end{align}  
        Therefore, to satisfy both conditions in \cref{eqn:upper_bound_D_ISP}, it is required that \(D = 1/L\) and \(m < L\). \hspace*{\fill}~\QED\par\endtrivlist\unskip
    \end{proof}

    Finally, the passivity theorem is invoked to analyze the input-output stability of the GD method represented as the negative feedback interconnection in \Cref{fig:loop_transformation}.
    \vspace{1pt}
    \begin{theorem}\label{thrm:io_stability_passivity_theorem}
        Consider the negative feedback interconnection representation of the GD method shown in \Cref{fig:loop_transformation}, where \(f \in \mathcal{S}_{m, L}\) with \( 0 < m \leq L\), and \(D = \alpha/2\). 
        \begin{enumerate}[label=\textit{\roman*})]
            \item {%
                Provided the step size \(\alpha \in (0, \frac{2}{L} )\) and \(\mbf{r}_1, \bar{\mbf{r}}_2 \in \ell_{2}\), then \(\mbf{y}_1 + D \mbf{u}_1 \in \ell_{2}\) and \(\mbf{y}_2 \in \ell_{2}\). \label{thrm:passivity_theorem_i}
            }%
            \item {%
                Additionally, for \(m < L\), provided the step size \(\alpha \in \mleft(0, \frac{2}{L}\mright]\), \(\bar{\mbf{r}}_2 = \mbf{0}\), and \(\mbf{r}_1 \in \ell_{2}\), then \(\mbf{y}_1 + D \mbf{u}_1 \in \ell_{2}\). \label{thrm:passivity_theorem_ii}
            }%
        \end{enumerate}
    \end{theorem}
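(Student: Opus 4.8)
The plan is to assemble the three preceding lemmas so that the loop-transformed interconnection in \Cref{fig:loop_transformation} satisfies the hypotheses of a passivity theorem, after which the conclusions can be read off directly. Throughout I fix \(D = \alpha/2\) as in the statement. The first observation, common to both parts, is that with \(D = \alpha/2\) the condition \(D \geq \alpha/2 > 0\) of \Cref{lemma:passivity_of_gd} holds with equality, so the modified controller \(\bar{\bm{\mathcal{G}}}_{\textrm{GD}}\) is passive regardless of which part we are in. It therefore remains only to certify the passivity grade of the transformed nonlinearity \(\bar{\mbs{\Delta}}\) of \Cref{fig:feedback_interconnection}, which \Cref{lemma:upper_bound_D_vsp} and \Cref{lemma:upper_bound_D_ISP} show is governed entirely by the size of \(D\) relative to \(1/L\).

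For part \ref{thrm:passivity_theorem_i}, the hypothesis \(\alpha \in (0, 2/L)\) translates into \(D = \alpha/2 \in (0, 1/L)\), i.e. \(D < 1/L\). Since \Cref{lemma:delta_vsp} already establishes that \(\mbs{\Delta}\) is VSP with the \(\varepsilon, \delta\) of \cref{eqn:epsilon_delta} and \(\beta = 0\), \Cref{lemma:upper_bound_D_vsp} applies and yields that \(\bar{\mbs{\Delta}}\) is VSP\@. With \(\bar{\bm{\mathcal{G}}}_{\textrm{GD}}\) passive, \(\bar{\mbs{\Delta}}\) VSP, and \(\mbf{r}_1, \bar{\mbf{r}}_2 \in \ell_2\), the strong passivity theorem (\Cref{thrm:strong_passivity}) applies to the transformed loop and gives that both closed-loop outputs lie in \(\ell_2\). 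Identifying the output of the passive controller block as \(\mbf{y}_1 + D\mbf{u}_1\)—the feedthrough \(D\eye\) being precisely what the loop transformation added—and the output of \(\bar{\mbs{\Delta}}\) as \(\mbf{y}_2\), this yields the claimed \(\mbf{y}_1 + D\mbf{u}_1 \in \ell_2\) and \(\mbf{y}_2 \in \ell_2\).

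For part \ref{thrm:passivity_theorem_ii}, the only case not already covered by part \ref{thrm:passivity_theorem_i} is the endpoint \(\alpha = 2/L\), which gives exactly \(D = 1/L\). This is the boundary value excluded from \Cref{lemma:upper_bound_D_vsp}: the quadratic in \cref{eqn:upper_bound_D_1_2} vanishes at its smaller root \(r_1 = 1/L\), forcing \(\bar{\varepsilon} = 0\), so only input strict passivity survives. Invoking \Cref{lemma:upper_bound_D_ISP}—which requires \(m < L\) so that \(r_1 < r_2\) and \(D = 1/L\) is an admissible single root—shows \(\bar{\mbs{\Delta}}\) is ISP\@. With \(\bar{\bm{\mathcal{G}}}_{\textrm{GD}}\) passive, \(\bar{\mbs{\Delta}}\) ISP, \(\bar{\mbf{r}}_2 = \mbf{0}\), and \(\mbf{r}_1 \in \ell_2\), the weak passivity theorem (\Cref{thrm:weak_passivity}) gives \(\mbf{y}_1 + D\mbf{u}_1 \in \ell_2\); since the weak theorem certifies only the controller output, no claim on \(\mbf{y}_2\) is made.

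The main obstacle is bookkeeping rather than analysis: one must verify that the transformation of \Cref{fig:loop_transformation} leaves the closed-loop signals unchanged and that the output of the transformed controller is \emph{exactly} \(\mbf{y}_1 + D\mbf{u}_1\), so that the abstract conclusions of \Cref{thrm:strong_passivity} and \Cref{thrm:weak_passivity}—stated for the generic signals of \Cref{fig:negative_feedback}—map correctly onto the GD signals. The only genuine case split is the strict-versus-boundary dichotomy \(D < 1/L\) against \(D = 1/L\), which is precisely the divide between \Cref{lemma:upper_bound_D_vsp} and \Cref{lemma:upper_bound_D_ISP}, and hence between the strong and weak passivity theorems.
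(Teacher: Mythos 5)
Your proof is correct and takes essentially the same route as the paper's: \Cref{lemma:passivity_of_gd} with \(D = \alpha/2\) for passivity of \(\bar{\bm{\mathcal{G}}}_{\textrm{GD}}\), \Cref{lemma:delta_vsp,lemma:upper_bound_D_vsp} plus \Cref{thrm:strong_passivity} for part \textit{i}), and \Cref{lemma:upper_bound_D_ISP} plus \Cref{thrm:weak_passivity} at the endpoint \(D = \alpha/2 = 1/L\) for part \textit{ii}). Your explicit remark that the interior of \(\mleft(0, 2/L\mright]\) in part \textit{ii}) is already handled by part \textit{i}), and the signal identification \(\mbf{y}_1 + D\mbf{u}_1\) as the transformed controller output, are bookkeeping details the paper leaves implicit, but the argument is the same.
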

    \vspace{2pt}
    \begin{proof}
        As per \Cref{lemma:passivity_of_gd}, \(\bar{\bm{\mathcal{G}}}_{\textrm{GD}}\) is passive if and only if \(0 < \alpha/2 \leq D\). Moreover, for \(f \in \mathcal{S}_{m, L}\), provided \(D < 1/L\), \(\bar{\mbs{\Delta}}\) is VSP, as per \Cref{lemma:upper_bound_D_vsp}. It follows that for \(D = \alpha/2\) with \(\alpha \in (0, 2/L)\), \(\bar{\bm{\mathcal{G}}}_{\textrm{GD}}\) is passive and \(\bar{\mbs{\Delta}}\) is VSP\@. The proof of \cref{thrm:passivity_theorem_i} then follows from \Cref{thrm:strong_passivity}. Furthermore, assuming \(m < L\) and \(D = \alpha/2 = 1/L\), \(\bar{\mbs{\Delta}}\) is ISP, as per \Cref{lemma:upper_bound_D_ISP}. Consequently, the proof of \cref{thrm:passivity_theorem_ii} then follows from \Cref{thrm:weak_passivity}, where for \(\alpha = 2/L\), \(\bar{\mbs{\Delta}}\) is ISP and \(\bar{\bm{\mathcal{G}}}_{\textrm{GD}}\) is passive. \hspace*{\fill}~\QED\par\endtrivlist\unskip 
    \end{proof}
    \section{Discussion and Extensions}\label{sec:discussion}
    The passivity-based analysis presented in \Cref{sec:main_results} decouples the stability analysis of the GD controller from the properties of the objective function. Notice, \Cref{lemma:passivity_of_gd} provides a lower bound on the feedthrough term \(D\) and dictates the necessary and sufficient condition for the controller to be passive. On the other hand, using the sector-bounds of the nonlinearity \(\mbs{\Delta}\), \Cref{lemma:delta_vsp,lemma:upper_bound_D_ISP,lemma:upper_bound_D_vsp} provide an upper bound on the feedthrough term \(D\). Therefore, for a function \(f \in \mathcal{S}_{m, L}\) with \(0 < m \leq L\), the passivity-based analysis of other first-order optimization algorithms can be approach as follows:
    \begin{enumerate}
        \item{%
            Cast the algorithm as an LTI controller in negative feedback with a static memoryless nonlinearity as per \Cref{fig:gd_neg_feedback}.
        }%
        \item{%
            Using the minimal realization of the controller, determine the lower bound on the feedthrough term \(D\) by solving the linear matrix
            inequality (LMI) in \cref{eqn:positive_real}.
        }%
        \item{%
            Determine the bounds on the algorithm's step size(s) that guarantee the feedthrough term \(D\) is within the bounds provided by \Cref{lemma:upper_bound_D_ISP,lemma:upper_bound_D_vsp}. Consequently, use the appropriate passivity theorem to guarantee the input-output stability of the algorithm.
        }%
    \end{enumerate}
    \subsection{Interpretation of Input-Output Stability}
        To draw a connection between the input-output stability of the negative feedback interconnection in \Cref{fig:loop_transformation} and the convergence of the GD method in \cref{eq:GD}, it is assumed that \(\mbf{r}_1^k = \mbf{r}_2^k = \mbf{0}\) for all \(k \in \mathbb{Z}_{\geq 0}\).
        
        For \(f \in \mathcal{S}_{m, L}\) with \( 0 < m \leq L\), \Cref{thrm:io_stability_passivity_theorem}-\cref{thrm:passivity_theorem_i} provides a sufficient condition to ensure the \(\ell_{2}\)-stability of \(\bar{\bm{\mathcal{G}}}_{\textrm{GD}}\) and \(\bar{\mbs{\Delta}}\). Firstly, if \(\mbf{y}_2 \in \ell_{2}\), then \(\mbf{y}_{2}^{k} \to \mbf{0}\) as \(k \to \infty\)~\cite{feedback_systems}. It follows that
        \begin{equation}\label{eqn:convergence_y2}
            \mbf{y}_{2}^{k} = \nabla f(\mbs{\xi}^k + \mbf{x}^\ast) = \nabla f(\mbf{x}^k ) \to \mbf{0}.
        \end{equation}
        Similarly, \(\mbf{y}_1 + D \mbf{u}_1 \in \ell_{2}\) implies that
        \begin{equation}\label{eqn:convergence_y1}
            \mbf{x}^k-D \nabla f(\mbf{x}^k ) \to \mbf{x}^\ast.
        \end{equation}
        From \cref{eqn:convergence_y2} and \cref{eqn:convergence_y1}, it follows that \(\mbf{x}^k \to \mbf{x}^\ast\). Therefore, the GD method is globally convergent for \(\alpha \in \mleft( 0, 2/L \mright)\). This result is consistent with Polyak's convergence analysis of the GD method in~\cite[Theorem 1]{Polyak}, where \(f\) is only assumed to be \(L\)-smooth and bounded from below. For \(f \in \mathcal{F}_{m, L} \), this convergence region is also obtained within the framework of equilibrium-independent dissipativity (EID) with quadratic supply rates in~\cite[Example 5.4]{simpson}.
        
        As discussed in~\cite{Polyak}, for \(f(x) = Lx^2/2\) with \(x \in \mathbb{R}\) and \(L \in \mathbb{R}_{>0}\), the choice of \(\alpha \geq 2/L\) results in \(f(\mbf{x}^{k+1}) \geq f(\mbf{x}^{k})\), for \(k \in \mathbb{Z}_{\geq 0}\). Here, for \(f \in \mathcal{S}_{m, L}\) with \(m < L\), \Cref{thrm:io_stability_passivity_theorem}-\cref{thrm:passivity_theorem_ii} can only guarantee \cref{eqn:convergence_y1}, which does not necessarily imply that \(\mbf{x}^k \to \mbf{x}^\ast\). In fact, inspired by~\cite{Polyak}, consider the objective function \(f(\mbf{x}) = \frac{1}{2}\mbf{x}^{\trans}\mbf{A}\mbf{x}\), where \(\mbf{x} = \begin{bmatrix} x_1 & x_2\end{bmatrix}^{\trans}\), \(\mbf{A} = \diag(m, L)\), and \(m < L\). It can be shown that for \(\alpha=2/L\), the GD method in \cref{eq:GD}, initialized using any \(\mbf{x}^0 \in \mathbb{R}^{2}\) leads to \(x_2^{k+1} = (-1)^{k+1}x_2^{k}\), for \(k \in \mathbb{Z}_{\geq 0}\). However, given that \(D = \alpha/2 = 1/L\), \cref{eqn:convergence_y1} still holds since \(x_2^k - (1/L)(Lx_2^k) = x_2^\ast = 0\). Therefore, although the series of iterations \(\{x_2^k\}\) produced by the GD method does not converge to \(x_2^\ast\), \cref{eqn:convergence_y1} still approaches \(x_2^\ast\). More generally, from \cref{eqn:convergence_y1}, it follows that
        \begin{equation}\label{eqn:new_stopping_criterion_1}
            \mleft( \mbf{x}^{k} - D \nabla f(\mbf{x}^{k}) \mright) - \mleft( \mbf{x}^{k-1} - D \nabla f(\mbf{x}^{k-1} ) \mright)  \to \mbf{0}.
        \end{equation}
        Since \(D=\alpha/2\), substituting the GD update rule \cref{eq:GD} into \cref{eqn:new_stopping_criterion_1} and simplifying yields \(\nabla f(\mbf{x}^{k}) + \nabla f(\mbf{x}^{k-1} ) \to \mbf{0}\).
        As such, in practice, the stopping criterion
        \begin{equation}
            \mleft\|\nabla f(\mbf{x}^{k}) + \nabla f(\mbf{x}^{k-1})\mright\|^{2}_{2} < \epsilon,
        \end{equation}
        for an arbitrarily small \(\epsilon \in \mathbb{R}_{>0}\), can be used to verify the relation in \cref{eqn:convergence_y1}.
    \subsection{Extension to Time Varying Step Sizes}\label{subsec:time_varying_step_sizes}
        \begin{figure}[t]
            \centering
            \vspace{1pt}
            \subfloat[Gain-scheduling a modified GD controller \(\bar{\bm{\mathcal{G}}}\) with step size \(\alpha = 1\). The scheduling function \(s^k\) is used to scale the input and output of \(\bar{\bm{\mathcal{G}}}\) such that \(\bar{\mbf{u}}^{k}_{1} = s^{k}\mbf{u}^{k}_1\) and \(\mbf{y}^{k}_{1} = s^{k}\bar{\mbf{y}}^{k}_{1}\).\vspace{-4pt}]{%
    \resizebox{1.0\columnwidth}{!}{\begin{tikzpicture}[auto, node distance=2cm]

    \def\blockWidth{4em}                 
    \def\blockHeight{2em}                
    \def\blockVerticalSpacing{1cm}       

    \def\arrowTipLength{2mm}             
    \def\arrowTipWidth{1.5mm}            

    \def\smallArrowLength{0.98cm}         
    \def\smallArrowStretch{1.5}          
    \def\smallArrowShrink{0.8}           

    \def\dashedRectangleHeight{1.8cm}
    \def\dashedRectangleWidth{7.25cm}
    \def\dashedRectanglCenterXShift{1.3mm}
    \def\dashedRectanglCenterYShift{7mm}

    \def\dashedRectangleHeightG{1.5cm}
    \def\dashedRectangleWidthG{3.6cm}
    \def\dashedRectanglCenterXShiftG{0mm}
    \def\dashedRectanglCenterYShiftG{0mm}

    \def\textPadding{0.25cm}              
    \def\textxhift{1.5mm}                 

    \tikzset{
        cross/.style = {path picture={ \draw[] (path picture bounding box.south east) -- 
                                                    (path picture bounding box.north west) 
                                                    (path picture bounding box.south west) -- 
                                                    (path picture bounding box.north east);}}
    }

    \tikzset{    
        sum/.style  = {draw, fill=none, circle, node distance=1cm},
        mult/.style = {draw, fill=none, circle, node distance=1cm,
                       append after command={\pgfextra{\let\TikZlastnode\tikzlastnode} 
                       node[cross, at=(\TikZlastnode)] {}}}
    }

    \tikzset{     
        block/.style = {draw, fill=none, rectangle, 
                        minimum height=\blockHeight, minimum width=\blockWidth},
        square/.style = {draw, fill=none, rectangle, 
                        minimum height=\blockHeight, minimum width=\blockHeight}
    }

    \tikzset{    
        tmp/.style    = {coordinate}, 
        input/.style  = {coordinate},
        output/.style = {coordinate}
    }

    \tikzset{      
        pinstyle/.style = {pin edge={to-,thin}}
    }

    \tikzset{     
        myarrow/.style = {->, -{Triangle[length=\arrowTipLength, width=\arrowTipWidth]}}
    }

    \node [output] (output) {};

    \node [block, draw=blue, right=3*\smallArrowLength of output.west](output) (G) {%
        \(
            \begin{aligned}
                \mbs{\xi}^{k+1} &= \mbs{\xi}^{k} + \bar{\mbf{u}}_{1}^{k}
                \\%
                \bar{\mbf{y}}_{1}^{k}     &= \mbs{\xi}^{k} + D\bar{\mbf{u}}_{1}^{k}
            \end{aligned}
        \)
    };

    \node [square, right=\smallArrowLength of G.east] (scheduling_r) {$s^k$};
    \node [square, left=\smallArrowLength  of G.west] (scheduling_l) {$s^k$};

    \node [input, right=\smallArrowLength of scheduling_r.east] (input) {};

    \draw [myarrow] (scheduling_r) -- node[above, midway, xshift=2pt, yshift=-0.09cm]{$\bar{\mbf{u}}_{1}^{k}$}(G);
    \draw [myarrow] (G) -- node[above, midway, xshift=2pt, yshift=-0.09cm]{$\bar{\mbf{y}}_{1}^{k}$}(scheduling_l);
    \draw [myarrow] (scheduling_l) -- node[anchor=south, xshift=-6pt, yshift=-0.09cm]{$\mbf{y}_{1}^k$}(output);
    \draw [myarrow] (input) -- node[above, midway, xshift=6pt, yshift=-0.09cm]{$\mbf{u}_{1}^k$}(scheduling_r);

    \node[below, yshift=-\blockHeight/2 - 5pt, xshift=-\dashedRectanglCenterXShiftG] at (G.center) {\textcolor{blue}{Modified GD Controller $\bar{\bm{\mathcal{G}}}$}};

    \node[above, yshift=\blockHeight/2 + 9pt, xshift=-\dashedRectanglCenterXShift] at (G.center) {Gain-Scheduled Modified GD Controller $\bar{\bm{\mathcal{G}}}_{\textrm{GS}}$};
    \node [tmp, below=(\blockVerticalSpacing + \blockHeight)/2 - \dashedRectanglCenterYShift of G.center, xshift=-\dashedRectanglCenterXShift] (sumSr)(center) {};
    \draw[dashed] (center) ++(-\dashedRectangleWidth/2, -\dashedRectangleHeight/2) rectangle ++(\dashedRectangleWidth, \dashedRectangleHeight);
\end{tikzpicture}}%
            \label{fig:scheduling_input_output}%
            }%
            \hspace{0pt}
            \subfloat[Gain-scheduling a GD controller \(\bm{\mathcal{G}}\) with step size \(\alpha = 1\). The scheduling function \(s^k\) is used to scale the input and output of \(\bm{\mathcal{G}}\). A feedthrough term \(\bar{D} \eye = \mleft( s^k \mright)^2 D \eye\) is added such that \(\mbf{y}_{1}^{k} = \bar{D} \mbf{u}_{1}^{k} + s^k\mbf{y}^k\).]{%
    \resizebox{1.0\columnwidth}{!}{\begin{tikzpicture}[auto, node distance=2cm]

    \def\blockWidth{4em}                 
    \def\blockHeight{2em}                
    \def\blockVerticalSpacing{0.15cm}       

    \def\arrowTipLength{2mm}             
    \def\arrowTipWidth{1.5mm}            

    \def\smallArrowLength{0.78cm}         
    \def\smallArrowStretch{1.5}          
    \def\smallArrowShrink{0.8}           

    \def\dashedRectangleHeight{2.7cm}
    \def\dashedRectangleWidth{7.45cm}
    \def\dashedRectanglCenterXShift{3.7mm}
    \def\dashedRectanglCenterYShift{7mm}

    \def\dashedRectangleHeightG{1.5cm}
    \def\dashedRectangleWidthG{3.6cm}
    \def\dashedRectanglCenterXShiftG{0mm}
    \def\dashedRectanglCenterYShiftG{0mm}

    \def\textPadding{0.25cm}              
    \def\textxhift{1.5mm}                 

    \tikzset{
        cross/.style = {path picture={ \draw[] (path picture bounding box.south east) -- 
                                                    (path picture bounding box.north west) 
                                                    (path picture bounding box.south west) -- 
                                                    (path picture bounding box.north east);}}
    }

    \tikzset{    
        sum/.style  = {draw, fill=none, circle, node distance=1cm},
        mult/.style = {draw, fill=none, circle, node distance=1cm,
                       append after command={\pgfextra{\let\TikZlastnode\tikzlastnode} 
                       node[cross, at=(\TikZlastnode)] {}}}
    }

    \tikzset{     
        block/.style = {draw, fill=none, rectangle, 
                        minimum height=\blockHeight, minimum width=\blockWidth},
        square/.style = {draw, fill=none, rectangle, 
                        minimum height=\blockHeight, minimum width=\blockHeight}
    }

    \tikzset{    
        tmp/.style    = {coordinate}, 
        input/.style  = {coordinate},
        output/.style = {coordinate}
    }

    \tikzset{      
        pinstyle/.style = {pin edge={to-,thin}}
    }

    \tikzset{     
        myarrow/.style = {->, -{Triangle[length=\arrowTipLength, width=\arrowTipWidth]}}
    }

    \node [output] (output) {};

    \node [block, draw=blue, right=4.5*\smallArrowLength of output.west](output) (G) {%
        \(
            \begin{aligned}
                \mbs{\xi}^{k+1} &= \mbs{\xi}^{k} + \bar{\mbf{u}}_{1}^{k}
                \\%
                \mbf{y}^{k}     &= \mbs{\xi}^{k}
            \end{aligned}
        \)
    };
    \node [block, above=\blockVerticalSpacing of G.north, anchor=south] (D) {\(\bar{D} \eye\)};

    \node [square, right=\smallArrowLength of G.east] (scheduling_r) {$s^k$};
    \node [square, left=\smallArrowLength  of G.west] (scheduling_l) {$s^k$};

    \node [input, right=1.5*\smallArrowLength of scheduling_r.east] (input) {};
    \node [sum, left=\smallArrowLength of scheduling_l.west] (sum) {};
    \node [tmp, right=\smallArrowLength/2 of scheduling_r.east] (temp) {};

    \draw [myarrow] (scheduling_r) -- node[above, midway, xshift=2pt, yshift=-0.09cm]{$\bar{\mbf{u}}_{1}^{k}$}(G);
    \draw [myarrow] (G) -- node[above, midway, xshift=3pt, yshift=-0.09cm]{$\mbf{y}^{k}$}(scheduling_l);
    \draw [myarrow] (scheduling_l) -- (sum)node[at end, right, yshift= \arrowTipLength] {$+$};;
    \draw [myarrow] (sum) -- node[anchor=south, yshift=-0.09cm]{$\mbf{y}_{1}^k$}(output);
    \draw [myarrow] (input) -- node[anchor=south, xshift=\smallArrowLength/2-1pt, yshift=-0.09cm]{$\mbf{u}_{1}^k$}(scheduling_r);
    \draw [myarrow] (D) -| (sum)node[at end, right, yshift= \arrowTipLength] {$+$};
    \draw [myarrow] (temp) |- (D){};

    \node[below, yshift=-\blockHeight/2 - 7pt, xshift=-\dashedRectanglCenterXShiftG] at (G.center) {\textcolor{blue}{GD Controller $\bm{\mathcal{G}}$}};

    \node[above, yshift=\blockHeight/2 + 1pt, xshift=-\dashedRectanglCenterXShift] at (D.center) {Gain-Scheduled Modified GD Controller $\bar{\bm{\mathcal{G}}}_{\textrm{GS}}$};
    \node [tmp, below=(\blockVerticalSpacing + \blockHeight)/2 - \dashedRectanglCenterYShift of G.center, xshift=-\dashedRectanglCenterXShift] (sumSr)(center) {};
    \draw[dashed] (center) ++(-\dashedRectangleWidth/2, -\dashedRectangleHeight/2) rectangle ++(\dashedRectangleWidth, \dashedRectangleHeight);
\end{tikzpicture}}%
                \label{fig:scheduling_input_output_add_feedthrough}%
            }%
            \caption{Two gain-scheduling architectures resulting in the same gain-scheduled modified GD controller \(\bar{\bm{\mathcal{G}}}_\textrm{GS}\) with minimal realization \(\mleft( \bar{\mbf{A}}, \bar{\mbf{B}}, \bar{\mbf{C}}, \bar{\mbf{D}} \mright) = (\eye, s^k\eye, s^k\eye, (s^k)^2 D\eye)\).}%
            \label{fig:gain_scheduling}
            \vspace{-10pt}
        \end{figure}
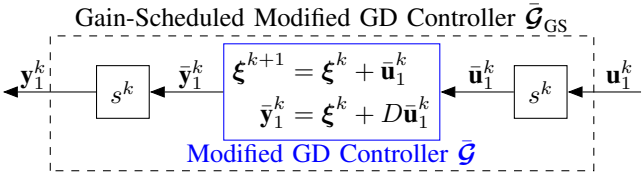
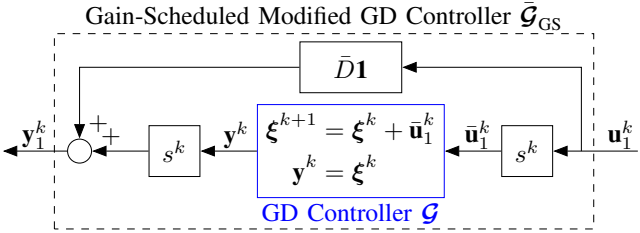
        This subsection introduces a new variation of the GD method with varying step size. The gain-scheduling architecture presented in~\cite{Damaren_passive_map} is adapted to discrete-time systems and used to schedule the input and output of the modified GD controller.
        
        Consider the modified GD controller \(\bar{\bm{\mathcal{G}}}\) in \Cref{fig:scheduling_input_output}, with a fixed step size \mbox{\(\alpha = 1\)}. As per \Cref{lemma:passivity_of_gd}, \(\bar{\bm{\mathcal{G}}}\) is passive if and only if \(D \geq 1/2\). From \Cref{def:passivity}, it follows that there exists a \(\beta \in \mathbb{R}_{\leq 0}\) such that \(\beta \leq \left\langle \bar{\mbf{y}}_1, \bar{\mbf{u}}_1 \right\rangle_T\) for all \(\bar{\mbf{u}}_1 \in \ell_{2e}\) and \(T \in \mathbb{Z}_{>0}\). This controller is gain-scheduled as per~\cite{Damaren_passive_map} using some general scheduling function \mbox{\(s^k \in \mathbb{R}\setminus {0}\)}, for \(k \in \mathcal{T} = \mleft\{0, 1, \hdots, T-1 \mright\}\), with \(T \in \mathbb{Z}_{>0}\). From the scheduling relation in \Cref{fig:scheduling_input_output}, it follows that
        \begin{equation*}
            \beta \leq 
            \left\langle \bar{\mbf{y}}_1, \bar{\mbf{u}}_1 \right\rangle_T 
            = 
            \left\langle \bar{\mbf{y}}_1, s\mbf{u}_1 \right\rangle_T 
            =
            \left\langle s\bar{\mbf{y}}_1, \mbf{u}_1 \right\rangle_T 
            =
            \left\langle \mbf{y}_1, \mbf{u}_1 \right\rangle_T.
        \end{equation*}
        Therefore, the gain-scheduling of a passive discrete-time system as per \Cref{fig:scheduling_input_output} results in a gain-scheduled passive system. As shown in \Cref{fig:scheduling_input_output_add_feedthrough}, \(\bar{\bm{\mathcal{G}}}_{\textrm{GS}}\) can also be constructed by adding a feedthrough term \(\bar{D} \eye = \mleft( s^k \mright)^2 D \eye\) to the gain-scheduled GD controller. It follows that \(\bar{\bm{\mathcal{G}}}_{\textrm{GS}}\) is passive if and only if \(\bar{D} \geq \mleft( s^k \mright)^2/2\). Moreover, the negative feedback interconnection of \(\bar{\bm{\mathcal{G}}}_{\textrm{GS}}\) and \(\bar{\mbs{\Delta}}\), as per \Cref{fig:loop_transformation} can then be used to represent a variation on the GD algorithm with the update rule
        \begin{equation}\label{eqn:gs_gradient_descent}
            \mbf{x}^{k+1} = \mbf{x}^{k} - s^{k}\nabla f(s^{k}\mbf{x}^{k}),
        \end{equation}
        where the scheduling function \(s^k \in \mathbb{R}\setminus {0}\) can be used to represent time varying step sizes. For a constant \(s^k = s\), using the change of variable \(\bar{\mbf{x}}^k = s\mbf{x}^k\), the update rule in~\cref{eqn:gs_gradient_descent} can be written as \(\bar{\mbf{x}}^{k+1}\ = \bar{\mbf{x}}^{k} - s^2\nabla f(\bar{\mbf{x}}^{k})\), recovering the original GD algorithm with step size \(\alpha = s^2\). For \(f \in \mathcal{S}_{m, L}\), \(\bar{\mbs{\Delta}}\) is VSP as per \Cref{lemma:upper_bound_D_vsp}, or ISP as per \Cref{lemma:upper_bound_D_ISP}, provided \(\bar{D} < 1/L\) or \(\bar{D}=1/L\), respectively. Consequently, using the lower bound \(\bar{D} \geq \mleft( s^k \mright)^2/2\), a similar analysis as \Cref{thrm:io_stability_passivity_theorem} shows that for \(m < L\), provided \(\max_{k \in \mathcal{T}} \lvert s^k \rvert \in (0, \sqrt{2/L}]\), then \(\mbf{y}_1^k \in \ell_{2}\).
    \section{Numerical Results} \label{sec:simulation}
    In this section, the performance of the GD method in \cref{eq:GD} is compared with the proposed gain-scheduled GD method in \cref{eqn:gs_gradient_descent}. The objective function of interest is taken from~\cite{ugrinovskii}, where \(f \in \mathcal{S}_{m, L}\) is defined as
    \begin{equation}\label{eqn:fx}
        f(x) = \frac{L - m}{4} \mleft( \frac{L + m}{L - m}x^2 + 2\sin \mleft( x \mright) - 2x\cos \mleft( x \mright) \mright),
    \end{equation}
    for \(m = 1\) and \(L = 100\). As shown in \Cref{fig:sector_bound}, the function \(f\) is non-convex but has a sector-bounded gradient and a unique global minimizer at \(x^\ast = 0\).
    \begin{figure}[t]
        \centering
        \vspace{3pt}
        \includegraphics{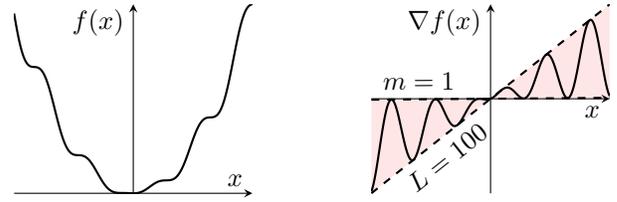}
        \caption{The function \(f \in \mathcal{S}_{m, L}\) defined in \cref{eqn:fx} for \(m=1\) and \(L=100\). This function is non-convex but has a sector-bounded gradient and a unique global minimizer at \(x^\ast = 0\).}
        \label{fig:sector_bound}
        \vspace{-10pt}
    \end{figure}
    
    To solve the unconstrained optimization problem in \cref{eq:optimization_problem} involving the function \(f\), the GD method in \cref{eq:GD} and the proposed gain-scheduled GD method in \cref{eqn:gs_gradient_descent} are implemented for various choices of step sizes and scheduling functions. Three choices of steps sizes are considered for the GD method. The first choice is a fixed step size of \(\alpha = 2/L\), being the largest possible step size to guarantee input-output stability for \(m < L\), as per \Cref{thrm:io_stability_passivity_theorem}-\cref{thrm:passivity_theorem_ii}. The second choice is the optimal fixed step size of \(\alpha = 2/(m + L)\)~\cite{lessard_dissipativity}. Finally, a varying step size, \(\alpha^{k}_{\textrm{btk}}\), is considered by implementing a line search algorithm based on Armijo's condition~\cite{Armijo}. Similarly, for the gain-scheduled GD method in \cref{eqn:gs_gradient_descent}, two choices of constant scheduling functions, \(s = \sqrt{2/L}\) and \(s = \sqrt{2/(m+L)}\), are considered. Additionally, a varying scheduling function, \(s^{k}_{\textrm{btk}}\), is considered by implementing a line search algorithm based on Armijo's condition with the added condition that \(\max_{k \in \mathcal{T}} \lvert s^k \rvert \in ( 0, \sqrt{2/L}]\). A Monte Carlo simulation is conducted with \(10^5\) uniformly sampled initial conditions \(x^0 \in [-10^5, 10^5]\) to compare the number of iterations needed to achieve \(\lvert \nabla f(x^k) \rvert < 10^{-12}\) for the GD method and the gain-scheduled GD method. 

    As shown in \Cref{fig:histogram} and \Cref{tab:a}, for the constant step size and constant scheduling function cases, the distribution of the gain-scheduled GD method has smaller mean, median, and mode values, compared to the standard GD method. For the varying step size and varying scheduling function case, the standard GD method has a lower mean, but a higher mode, compared to the gain-scheduled GD method. The lower mode indicates better performance in terms of fewer iterations. Finally, for both methods, \Cref{tab:a} also confirms that time-varying step sizes and scheduling functions can lead to improved performance.
    \begin{figure}[t]
        \centering
        \vspace{4pt}
        \includegraphics[width=\columnwidth]{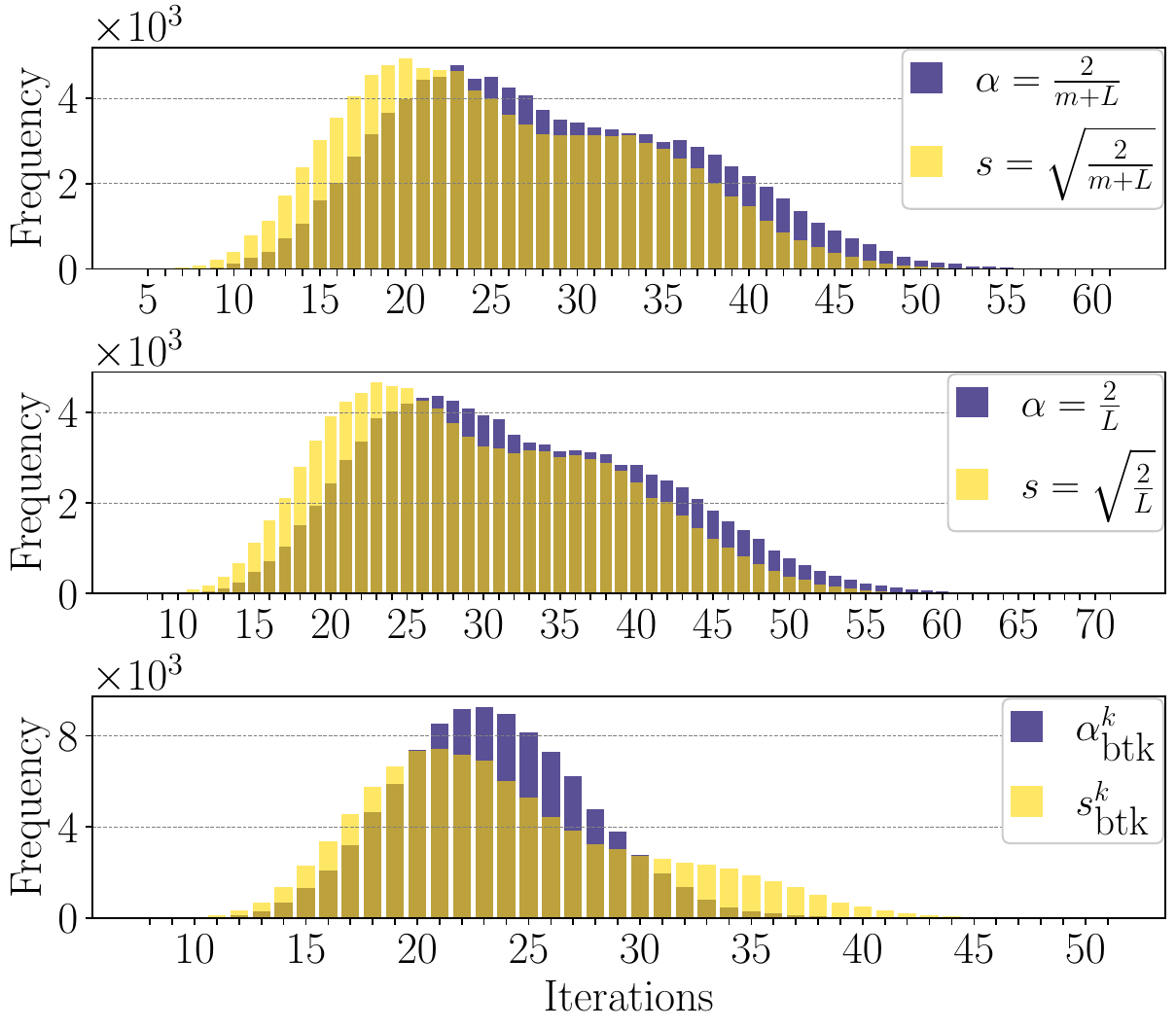}
        \vspace{-15pt}
        \caption{Comparison of number of iterations needed to achieve \(\lvert \nabla f(x^k) \rvert < 10^{-12}\) for the GD method and the gain-scheduled GD method with different step sizes and scheduling functions using \(10^5\) uniformly sampled initial conditions for \(x^0\in[-10^5, 10^5]\).}
        \label{fig:histogram}
        \vspace{5pt}
    \end{figure}
    \begin{table}[t]
        \caption{Corresponding Mean, Median, and Mode of \Cref{fig:histogram}}
        \vspace{-10pt}
        \label{tab:a}
        \fontsize{9pt}{11pt}\selectfont
        \begin{tabularx}{\columnwidth}{l *{3}{>{\centering\arraybackslash}X}}
            \hline
            \hline
            \addlinespace[1pt]
            {\fontsize{10pt}{12pt}\selectfont Parameter}
            & 
            {\fontsize{10pt}{12pt}\selectfont Mean}
            &
            {\fontsize{10pt}{12pt}\selectfont Median} 
            &
            {\fontsize{10pt}{12pt}\selectfont Mode} \\
            \hline
            \addlinespace[1pt]
            {\fontsize{8pt}{10pt}\selectfont$\alpha = 2/(m+L)$}        & {\(28.36\)} & {\(27\)} & {\(23\)} \\
            {\fontsize{8pt}{10pt}\selectfont$s = \sqrt{2/(m+L)}$}      & {\(\textbf{25.68}\)} & {\(\textbf{25}\)} & {\(\textbf{20}\)} \\
            \hline
            \addlinespace[1pt]
            {\fontsize{8pt}{10pt}\selectfont$\alpha = 2/L$}            & {\(32.12\)} & {\(31\)} & {\(27\)} \\
            {\fontsize{8pt}{10pt}\selectfont$s = \sqrt{2/L}$}          & {\(\textbf{29.50}\)} & {\(\textbf{28}\)} & {\(\textbf{23}\)} \\
            \hline
            {\fontsize{8pt}{10pt}\selectfont$\alpha^k_{\textrm{btk}}$} & {\(\textbf{23.38}\)} & {\(23\)} & {\(23\)} \\
            {\fontsize{8pt}{10pt}\selectfont$s^k_{\textrm{btk}}$}      & {\(24.08\)} & {\(23\)} & {\(\textbf{21}\)} \\
            \addlinespace[1pt]
            \hline
            \hline
        \end{tabularx}
        \vspace{-10pt}
    \end{table}
    \section{Closing Remarks}\label{sec:closing_remarks} 
    A discrete-time passivity-based analysis of the GD method is considered in this paper. For an objective function with sector-bounded gradient and a unique global minimizer, \(f \in \mathcal{S}_{m, L}\), it is shown that the shifted gradient \(\mbs{\Delta}\) is VSP\@. Using a loop transformation, it is shown that the GD method can be interpreted as a passive controller in negative feedback with a VSP system, provided \(\alpha \in (0, 2/L)\). The input-output stability, as well as the global convergence, of the GD method is then guaranteed using the strong version of the passivity theorem. Furthermore, provided \(m < L\), it is shown that for \(\alpha = 2/L\), the passive GD controller is now in negative feedback with an ISP system instead. Therefore, the weak passivity theorem is used to guarantee the input-output stability of the GD method. For this larger choice of step size, Polyak's counter example in~\cite{Polyak} is revisited, and a new stopping criterion is proposed to ensure the GD method can still be used to obtain the unique global minimizer. Finally, a new variation of the GD method is proposed, where the input and output of the gradient are scaled using a discrete-time adaptation of the gain-scheduling architecture in~\cite{Damaren_passive_map}.
    \printbibliography
\end{document}